\theoremstyle{plain}
\newtheorem{theorem}{Theorem}[section]
\newtheorem{corollary}[theorem]{Corollary}
\newtheorem{lemma}[theorem]{Lemma}
\theoremstyle{definition}
\newtheorem{definition}{Definition}
\theoremstyle{remark}
\newtheorem{remark}{Remark}
\newtheorem*{acknowledgements}{Acknowledgements}
\DeclareMathOperator*{\osc}{osc}
\DeclareMathOperator*{\dist}{dist}
\def\R{\mathbb R}
\def\N{\mathbb N}
\numberwithin{equation}{section}
\author{Lauri Hitruhin}
\address{University of Helsinki, Department of Mathematics and Statistics, P.O. Box 68, FIN-00014 University of Helsinki, Finland}
\email{lauri.hitruhin@helsinki.fi}
\author{Athanasios Tsantaris}
\address{University of Helsinki, Department of Mathematics and Statistics, P.O. Box 68, FIN-00014 University of Helsinki, Finland}
\email{athanasios.tsantaris@helsinki.fi}
\subjclass[2020]{Primary 30C65; Secondary 32A30, 26B10}
\keywords{Finite distortion curves, finite distortion mappings, quasiregular curves.}
\title[Finite distortion curves]{Finite distortion curves: Continuity, Differentiability and Lusin's (N) property}
\begin{document}
\let\thefootnote\relax\footnotetext{This work was supported by the Academy of Finland projects \#332671 and SA-1346562 }
	\maketitle
\begin{abstract}
	We define finite distortion $\omega$-curves and we show that for some forms $\omega$ and when the distortion function is sufficiently exponentially integrable the map is continuous, differentiable almost everywhere and has Lusin's (N) property.  This is achieved through some higher integrability results about finite distortion $\omega$-curves. It is also shown that this result is sharp both for continuity and for Lusin's (N) property. We also show that if we assume weak monotonicity for the coordinates of a finite distortion $\omega$-curve we obtain continuity. 
\end{abstract}
	\section{Introduction}
	The study of mappings of \emph{finite distortion} is a central theme in  geometric function theory. Their importance stems in part from the various connections with other fields like holomorphic dynamics, PDEs and non-linear elasticity to name a few.	A mapping $f\in W_{loc}^{1,1}(\Omega,\R^n)$, where $\Omega$ is a domain in $\R^n$, is said to be of finite distortion if $J_f\in L^1_{loc}$ and there exists a measurable function $K_f(x)\geq 1$ which is finite almost everywhere  such that the  following inequality is satisfied 
	\begin{equation}\label{eq01}
		|Df(x)|^n\leq K_f(x) J_f(x) \ \text{a.e.},
	\end{equation}
where $|Df|$ denotes the sup norm of the differential $Df$ and $J_f$ the Jacobian determinant. When $K_f(x)\leq K<\infty$ almost everywhere, where $K$ constant, such a map is called \emph{quasiregular} (also known as maps of bounded distortion). 

Quasiregular maps are the correct higher dimensional generalization of holomophic maps in the complex plane. Starting with Reshetnyak, they have been studied extensively since the sixties and by now their theory is well developed. We refer to the books  \cite{Reshetnyak1989,Rickman} for more details on quasiregular maps. In many applications the distortion must be allowed to blow up in a controlled manner. This has led to  systematic study of the more general class of mappings with finite distortion. By now there is a quite extensive literature for these maps and we refer to the books \cite{Hencl2014, Iwaniec2001} for their general theory. 

One of the basic facts about maps of finite distortion concerns their continuity.  In \cite{Vodopyanov1977} Vodopyanov and Goldshtein showed that a mapping of finite distortion $f\colon\Omega\to \R^n$ that belongs in the Sobolev space $W_{loc}^{1,n}(\Omega,\R^n)$ has a continuous representative in the same Lebesgue class. Notice that a finite distortion mapping does not necessarily belong in  $W_{loc}^{1,n}(\Omega,\R^n)$. However, in \cite[Theorem 7.1]{Iwaniec2002} it was shown that if we assume $e^{\lambda K_f(x)}\in L^1_{loc}$, for some large enough $\lambda>0$ then we have  that $f$ is in $W_{loc}^{1,n}(\Omega,\R^n)$ and as a result it has a  continuous representative. In fact, in \cite{Iwaniec2001a} it was shown that  $e^{\lambda K_f(x)}\in L^1_{loc}$, for any $\lambda>0$ is enough in order to obtain continuity. Moreover, mappings of finite distortion with $e^{\lambda K_f(x)}\in L^1_{loc}$, for some $\lambda>0$, are almost everywhere differentiable and have Lusin's (N) property, i.e. they map sets of zero (Lebesgue) measure to sets of zero measure, see \cite{Hencl2014} and also \cite{Kauhanen2001,Kauhanen2003}. 

Recently there has been a lot of interest in generalizing quasiregular mappings in the setting where the range and the domain of definition do not have the same dimension, see \cite{Pankka2020,Onninen2021,Heikkilae2021,Heikkilae2021a}. Pankka in \cite{Pankka2020} called such maps \emph{quasiregular curves} in accordance with their holomorphic counterparts which are called holomorphic curves. In this paper we study  under what conditions do \textit{curves of finite distortion}, which are a generalization of quasiregular curves, have the desirable properties we mentioned above. By $\Omega^n(\R^m)$ we denote the space of smooth differential $n$-forms in $\R^m$, where $n\leq m$. In the following definition $\omega\in \Omega^n(\R^m)$ is a smooth, non-vanishing and closed differential $n$-form. We call such forms $n$-\textit{volume forms}.
\begin{definition}[Finite distortion $\omega$-curve]\label{def1}
	A mapping $f\in W^{1,1}_{loc}(\Omega,\R^m)$, where $\Omega$ is a domain in $\R^n$ and $n\leq m$ is called a finite distortion $\omega$-curve if $\star f^*\omega\in L^1_{loc}$ and there exists  a measurable function $K_f(x)\geq 1$ which is finite almost everywhere  such that the  following inequality is satisfied 
	\begin{equation}\label{eq03}
		(||\omega||\circ f) |Df(x)|^n\leq K_f(x) (\star f^*\omega),
	\end{equation}
almost everywhere on $\Omega$.
\end{definition} 
Here $\star f^*\omega$ is the Hodge star of the $n$-volume form $f^*\omega$ (the pullback of $\omega$), i.e. the function that satisfies $(\star f^*\omega) dx_1\wedge\dots\wedge dx_n=f^*\omega$. The function $||\omega||\colon \R^m\to[0,\infty)$ is the pointwise comass norm of $\omega$ defined as  \begin{equation}\label{eq04}
	||\omega||(p)=\sup\{|\omega_p(v_1,\dots,v_n)|\colon \ v_1,\dots,v_2\in \R^m, \ |v_i|\leq 1\}
\end{equation}
for each $p\in\R^m$.

 When $K_f(x)\leq K<\infty$, where $K$ is some constant, we get the class of quasiregular $\omega$-curves. We note here that in \cite{Pankka2020} quasiregular curves are assumed to be continuous by definition. However in \cite{Onninen2021} Pankka and Onninen showed that the continuity assumption can be dropped from the definition when the form $\omega$ has constant coefficients.

Our first result shows that, unlike the case of mappings of finite distortion, $\exp(\lambda K_f(x)) \in L^1_{loc}$, for any $\lambda>0$ is not enough to guarantee the existence of a continuous representative even for the most simple forms. In fact it shows that even  $f\in W^{1,n}_{loc}(\Omega,\R^m)$ is not enough (compare this with \cite[Theorem 1.4]{Iwaniec2001a}).  
\begin{theorem}\label{thm2}
	For every $\lambda<2$ there exists a finite distortion $\omega$-curve, $f:\R^2\to\R^3$ for $\omega=dx\wedge dy$ such that $f\in W^{1,2}_{loc}(\R^2,\R^3)$ and $\exp(\lambda K_f(x))\in L^1_{loc}$, but $f$ does not have a continuous representative.
\end{theorem}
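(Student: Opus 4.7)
My plan is to build the counterexample explicitly and radially. For $r=|x|$ and a small $\rho>0$, I define
\[
f(x) = \Bigl( h(r)\tfrac{x_1}{r},\ h(r)\tfrac{x_2}{r},\ g(r)\Bigr),\qquad h(r)=\frac{1}{\log(1/r)},\quad g(r)=\log\log(1/r),
\]
for $r\le \rho$, and extend $f$ smoothly to all of $\R^2$ outside $B(0,\rho)$ so that $f$ becomes constant far from the origin. Discontinuity will come from $g(r)\to\infty$ as $r\to 0$; the role of $h$ is to balance the two singular values of $Df$ against $J_{(f_1,f_2)}$ with the sharp constant.

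The first step is a general formula for radial maps of this type. In the polar orthonormal frame $\{\partial_r,\, r^{-1}\partial_\theta\}$ on $\R^2\setminus\{0\}$, the images
\[
\partial_r f = (h'\cos\theta,\, h'\sin\theta,\, g'),\qquad \tfrac{1}{r}\partial_\theta f = \tfrac{1}{r}(-h\sin\theta,\, h\cos\theta,\, 0)
\]
are orthogonal in $\R^3$, so the singular values of $Df$ are simply their lengths, giving
\[
|Df|^2 = \max\!\Bigl((h')^2+(g')^2,\ (h/r)^2\Bigr),\qquad J_{(f_1,f_2)} = \frac{h'(r)\,h(r)}{r}.
\]
Since $\omega=dx\wedge dy$ has constant comass $\|\omega\|\equiv 1$ on $\R^3$, the distortion inequality \eqref{eq03} reduces to $|Df|^2\le K_f\cdot h'h/r$, so we may take $K_f$ equal to the ratio of both sides.

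The second step is to substitute the choices of $h$ and $g$. Differentiating gives $h'(r)=1/(r\log^2(1/r))$, $(g')^2=1/(r^2\log^2(1/r))$, and $(h/r)^2=1/(r^2\log^2(1/r))$, so $|Df|^2\sim 1/(r^2\log^2(1/r))$ as $r\to 0$, while $J_{(f_1,f_2)} = 1/(r^2\log^3(1/r))$. Therefore
\[
K_f(x)\ \sim\ \log\bigl(1/|x|\bigr)\qquad\text{as } |x|\to 0.
\]
The three required properties follow at once. Discontinuity: $f_3(x)=\log\log(1/|x|)\to\infty$, so no continuous representative exists. Sobolev regularity: $\int_0^\rho |Df|^2 r\,dr \sim \int_0^\rho dr/(r\log^2(1/r)) <\infty$, so $f\in W^{1,2}_{\mathrm{loc}}$. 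Exponential integrability: for every $\lambda<2$,
\[
\int_{B(0,\rho)} e^{\lambda K_f}\,dx\ \sim\ \int_0^\rho r^{1-\lambda}\,dr\ <\ \infty,
\]
while the same integral diverges for $\lambda=2$, confirming that the threshold produced by the construction is exactly $2$.

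The main delicacy, and the whole point of the construction, is the logarithmic balancing. Choosing $h(r)=1/\log(1/r)$ forces $(h/r)^2$ and $(g')^2$ to agree to leading order at $1/(r^2\log^2(1/r))$, so neither the collapse of the first two coordinates nor the growth of the third dominates $|Df|^2$; meanwhile $J_{(f_1,f_2)}$ gains an extra factor $1/\log(1/r)$ from $h/h'$. This mismatch is precisely what makes $K_f$ grow at the logarithmic rate $\log(1/r)$ and pins the critical exponent at $\lambda<2$. Any slower growth of $g$ would restore continuity, and any faster growth would destroy $W^{1,2}$.
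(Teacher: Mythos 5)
Your construction is essentially the paper's: the first two coordinates are the radial collapse $h(r)x/r$ with $h(r)\sim 1/\log(1/r)$, the third is $g(r)=\log\log(1/r)$, and the critical exponent $\lambda<2$ comes out exactly as in the paper (the paper uses $h(r)=1/(e+\log(1/r))$ and $g(r)=\log\log(e+|\log r|)$, but these differ only by a normalization that makes the pieces match up smoothly at $|z|=1$). Your polar-frame derivation of the singular values of $Df$ and the resulting identity $K_f=\log(1/r)+1/\log(1/r)$ is a clean and correct way to organize the computation; the integrability and discontinuity checks are right.

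The one step you cannot carry out as stated is the global extension: you say you will ``extend $f$ smoothly \ldots so that $f$ becomes constant far from the origin.'' For the first two coordinates this is incompatible with being a finite distortion $\omega$-curve. If $F=(f_1,f_2)$ is radial, $F=\phi(r)e^{i\theta}$, then $J_F=\phi'\phi/r$ and $|DF|=\max(|\phi'|,\,\phi/r)$; interpolating from $h(r)>0$ on $\partial B(0,\rho)$ to a constant (in particular $\phi'\equiv 0$ with $\phi\neq 0$, or $\phi'<0$ with $\phi>0$) forces $J_F\le 0$ on a set of positive measure while $|DF|>0$ there, so no finite $K_f$ can satisfy $|Df|^2\le K_f\,\star f^*\omega$. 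The fix is exactly what the paper does: extend $(f_1,f_2)$ by a linear map $z/e$ outside the unit ball (so $J_F\equiv 1/e^2>0$ there) and let $f_3$ have a slowly growing, not constant, tail; the distortion is then bounded away from the origin and the exponential integrability is unaffected. With that replacement your argument is complete.
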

 The situation is similar for Lusin's (N) condition as well. The condition $\exp(\lambda K_f(x)) \in L^1_{loc}$, for any $\lambda>0$ is not enough.
\begin{theorem}\label{thmlusin}
	For every $\lambda<2$ there exists a finite distortion $\omega$-curve, $f:\R^2\to\R^4$ for $\omega=dx\wedge dy$ such that  $f\in W^{1,2}_{loc}(\R^2,\R^4)$ and $\exp(\lambda K_f(x))\in L^1_{loc}$, but $f$ does not have Lusin's (N) property.
\end{theorem}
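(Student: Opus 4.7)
The plan is to adapt the construction used for Theorem~\ref{thm2} to the $4$-dimensional target, so that the image of some planar null set has positive $\mathcal{H}^2$-measure. The key structural observation is that for a finite distortion $\omega$-curve with $\omega = dx\wedge dy$, the condition $\star f^*\omega \in L^1_{loc}$ only controls the partial Jacobian $J_{(f_1,f_2)}$, not the full area-Jacobian $J_f = \sqrt{\det(Df^\top Df)}$ of the image. Hence the usual area-formula route that forces Lusin's (N) for equidimensional finite distortion maps is unavailable for $\omega$-curves, and the critical threshold for Lusin's (N) ends up at $\lambda = 2$ rather than at any positive $\lambda$.

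I would construct $f$ as a countable superposition of rescaled copies of a single building block $f_0\colon B(0,1)\to\R^4$ with a single singularity at the origin, placed on pairwise disjoint discs $B(p_n,r_n)$ whose centres $\{p_n\}$ accumulate at a common point, and glued to identity-like data outside. The block $f_0$ is designed with two features. The first pair $(f_{0,1},f_{0,2})$ is chosen to generate a large but integrable partial Jacobian $\star f_0^*\omega$ near $0$, for instance via a radial power map $z\mapsto z|z|^{\alpha-1}$ with $J_G\asymp |z|^{2\alpha-2}$ that absorbs the subsequent growth of $|Df_0|$. The second pair $(f_{0,3},f_{0,4})$ is engineered so that the cluster set of $f_0$ at the singular point is a genuine $2$-dimensional subset of the $(f_3,f_4)$-plane; this is achieved by combining a radial amplitude and an angular winding that both depend on $r$, each going to its limit as $r\to 0$ in a controlled way. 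The block parameters are tuned so that $K_{f_0}(z)\asymp \log(1/|z|)$ near $0$, which yields $\exp(\lambda K_{f_0})\asymp |z|^{-\lambda}\in L^1_{loc}$ precisely for $\lambda<2$. The radii $r_n$ and block amplitudes are then chosen so that the $W^{1,2}$-norms and exponential integrability sums converge, while keeping $\sum_n \mathcal H^2(f_0\text{-cluster at }p_n)$ bounded below.

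Verification proceeds in three steps: (i) a polar-coordinate computation giving $f\in W^{1,2}_{loc}$ and $\exp(\lambda K_f)\in L^1_{loc}$ for the chosen $\lambda<2$; (ii) a pointwise check of the distortion inequality $|Df|^2\le K_f(\star f^*\omega)$; and (iii) the failure of Lusin's (N), obtained from $\mathcal H^2(f(\{p_n\}_{n\ge 1}))>0$ via the individual $2$-dimensional cluster sets at each $p_n$, while $\{p_n\}_{n\ge 1}$ is countable and hence of $\mathcal L^2$-measure zero. Steps (i) and (ii) reduce to routine calculations once the block $f_0$ is specified. The principal obstacle is step (iii) together with the prior design of $f_0$: producing a $2$-dimensional cluster set at a single singular point requires the pair $(f_{0,3},f_{0,4})$ to spread widely, which inflates $|Df_0|$ and hence $K_{f_0}$, and this must be balanced against a partial Jacobian $\star f_0^*\omega$ that blows up just fast enough to keep $K_{f_0}\lesssim \log(1/|z|)$ while staying integrable. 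It is precisely this balance that selects the sharp exponent $\lambda = 2$.
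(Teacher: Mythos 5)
Your structural observation is correct and is indeed the crux of the whole theorem: since $\omega = dx\wedge dy$, the distortion inequality only constrains the Jacobian of the pair $(f_1,f_2)$, so the remaining coordinates $(f_3,f_4)$ are essentially unconstrained in how much they can spread a set, and Lusin's (N) is no longer automatic. Your threshold analysis ($K\asymp\log(1/|z|)$ giving $\exp(\lambda K)\asymp|z|^{-\lambda}\in L^1_{loc}$ exactly for $\lambda<2$) is also the right calibration.

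However, step (iii) of your plan has a genuine and fatal gap. You propose to let the null set be the countable set $E=\{p_n\}_{n\ge 1}$ and to deduce $\mathcal H^2(f(E))>0$ from the fact that each $p_n$ is a singularity at which the \emph{cluster set} of $(f_3,f_4)$ is $2$-dimensional. But the image $f(E)=\{f(p_n):n\ge1\}$ is a countable set of points in $\R^4$ and therefore has $\mathcal H^2$-measure zero, no matter how wild the cluster sets are. Lusin's (N) concerns the actual image of a null set, not the set of limit values at singularities; a map can have an enormous cluster set at a point and still send that point (indeed any countable set) to an $\mathcal H^2$-null set. Related to this, your proposed building block $f_0$ would have to realise a $2$-dimensional cluster set at a \emph{single} isolated singularity, which contributes nothing to the image of a null set.

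The paper instead builds the null set as an \emph{uncountable} $\mathcal L^2$-null set: the segment $[-1,1]\times\{0\}$ (or the Cantor set of accumulation centres inside it). The pair $F=(f_1,f_2)$ is a recursive composition of radial stretches supported on a dyadic family of $4$ balls inside each previous ball, all centred on the segment; this gives $K_F(z)=1-\log|z-c_{i,n}|$ on each annulus, hence $\exp(\lambda K_F)\in L^1_{loc}$ for all $\lambda<2$ once the outer radii $R_n$ are chosen small enough. The auxiliary pair $G=(f_3,f_4)$ is then built generation by generation using a radial blow-up function from \cite[Lemma 5.1]{Kauhanen1999}: at generation $n$ it translates each ball's image toward the centre $w_{i,n}$ of the corresponding dyadic sub-square of $[-1,1]^2$, with gradient dominated (after tuning the inner radii) by $\varepsilon\sqrt{K_F J_F}$ so that the composite distortion of $f=(F,G)$ stays within $(1+\varepsilon)^2 K_F$. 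The $G_n$ converge uniformly to a continuous $G$; by continuity and compactness the image $G([-1,1]\times\{0\})$ is closed and contains the dense set of dyadic centres $\{w_{i,n}\}$, hence equals $[-1,1]^2$. Thus $f$ sends the $\mathcal L^2$-null segment onto a set of positive $\mathcal H^2$-measure. This Cantor-type, iterated-function-system mechanism — not a cluster-set argument at isolated points — is what produces the failure of Lusin's (N), and it is the piece missing from your proposal.
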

To explain why finite distortion curves differ from finite distortion maps in terms of their continuity properties we need to introduce the important concept of weak monotonicity. A real valued function $u\in W_{loc}^{1,1}(\Omega)$ is called weakly monotone if the following holds: For all balls $B$ compactly contained in $\Omega$ and all constants $m,M\in\R$, $m\leq M$ such that

\begin{equation}
	|M-u|-|u-m|+2u-m-M\in W^{1,1}_0(B)
\end{equation} we have that \[m\leq u(x)\leq M,\] for almost every $x\in B$. Here $W^{1,1}_0(B)$ stands for the closure of $C^\infty_0(B)$, the compactly supported smooth functions, with respect to the Sobolev space norm. For continuous functions this is equivalent to the more well known notion of monotonicity, namely 
\begin{equation}\osc_B u\leq \osc_{\partial B}u, 
\end{equation}
where $\osc_B u=\sup_{x,y\in B}|u(x)-u(y)|$.

The reason that weak monotonicity is important when discussing continuity is that the coordinate functions of a finite distortion map in a suitable Orlicz-Sobolev space, are weakly monotone, see for example \cite[Theorem 7.3.1]{Iwaniec2001}. It is also well known that weakly monotone functions in that same Orlicz-Sobolev space have continuous representatives (for  more precise formulations see  Lemma \ref{lemma0} in section \ref{prelim}).  Thus, roughly speaking, to show the existence of a continuous representative for a mapping of finite distortion it is enough to show that the map has sufficient regularity and its coordinates are weakly monotone.

However, the situation is different for curves of finite distortion. Their coordinate functions are not necessarily weakly monotone as our example in Theorem \ref{thm2} shows. On the other hand, if we assume the weak monotonicity of the coordinate maps and that $e^{\lambda K_f(x)}\in L^1_{loc}$, for some $\lambda>0$, then by adapting the arguments in \cite[Theorem 2.4]{Hencl2014} and \cite[Theorem 7.5.2]{Iwaniec2001} we obtain the existence of a continuous representative. 

\begin{theorem}\label{thm0}
	Let $f=(f_1,\dots,f_m)\colon \Omega\to \R^m$ be a finite distortion $\omega$-curve, for a bounded $n$-volume form $\omega$ in $\R^m$, such that the coordinate functions $f_i$, $i=1,\dots,m$ are weakly monotone. Suppose that there exists a $\lambda>0$ such that $e^{\lambda K_f(x)}\in L^1_{loc}$. Then $f$ has a continuous representative in $\Omega$.  	
\end{theorem}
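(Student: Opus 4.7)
The plan is to follow the two-stage structure of \cite[Theorem 2.4]{Hencl2014} and \cite[Theorem 7.5.2]{Iwaniec2001}: first promote $f$ from $W^{1,1}_{loc}$ to a suitable Orlicz--Sobolev class using the distortion inequality \eqref{eq03} together with the exponential integrability of $K_f$, and then invoke Lemma \ref{lemma0} coordinate by coordinate, exploiting weak monotonicity, to obtain a continuous representative.

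For the first stage, I would fix a ball $B \Subset \Omega$. Since $\omega$ is bounded we have $(\|\omega\|\circ f) \le C$, and \eqref{eq03} rearranges to $|Df|^n \le \widetilde{C}\, K_f(\star f^*\omega)/(\|\omega\|\circ f)$. With $\star f^*\omega \in L^1_{loc}$ by Definition \ref{def1} and $e^{\lambda K_f} \in L^1_{loc}$ by hypothesis, I would apply a Young-type inequality based on the conjugate Orlicz pair $\Phi(s) = s\log(e+s)$ and $\Psi(t)\simeq e^t-1$ to decouple $K_f$ from $\star f^*\omega$. The outcome is a pointwise bound of the schematic form
\begin{equation}
\frac{|Df|^n}{\log(e+|Df|)} \lesssim \frac{\star f^*\omega}{\|\omega\|\circ f} + \frac{1}{\lambda} e^{\lambda K_f} + 1,
\end{equation}
whose right-hand side is locally integrable (once the weight $\|\omega\|\circ f$ is under control), placing $f$ in the Orlicz--Sobolev class $W^{1,P}_{loc}(\Omega,\R^m)$ with $P(t) = t^n/\log(e+t)$.

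For the second stage, since each coordinate $f_i$ is weakly monotone by hypothesis and belongs to $W^{1,P}_{loc}(\Omega)$ by the first stage, Lemma \ref{lemma0} produces a continuous representative $\tilde f_i$; the assembled map $\tilde f = (\tilde f_1,\dots,\tilde f_m)$ is then the desired continuous representative of $f$.

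The main obstacle I anticipate lies in the first stage, because the distortion inequality controls the weighted quantity $(\|\omega\|\circ f)|Df|^n$ rather than $|Df|^n$ itself, and $\|\omega\|$ is only assumed bounded from above rather than bounded away from zero. One must use smoothness and non-vanishing of $\omega$ together with the local character of weak monotonicity to secure either a positive lower bound for $\|\omega\|\circ f$ on $B$ (which effectively requires some a priori local boundedness of $f$ extracted from weak monotonicity of the coordinates) or to reorganize the Young-type estimate so that the weight is absorbed directly into the already $L^1_{loc}$ quantity $\star f^*\omega$. Once that technical point is settled, the rest is a clean adaptation of the scalar mapping-of-finite-distortion argument via the cited preliminary lemmas.
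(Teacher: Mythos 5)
Your two-stage plan — first promote $f$ to $W^{1,P}_{loc}$ with $P(t)=t^n/\log(e+t)$ via the distortion inequality and exponential integrability of $K_f$, then invoke Lemma \ref{lemma0} coordinate-wise using weak monotonicity — is exactly the paper's argument, and the Young-type inequality you sketch is the one used there (namely $ab\le \exp(\kappa a)+\tfrac{2b}{\kappa}\log(e+b/\kappa)$, applied with $b=\tfrac{c_0\star f^*\omega}{\log(e+c_0\star f^*\omega)}$ and then a split of the domain of integration according to whether $\lambda\log(e+c_0\star f^*\omega)\le 1$).

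The ``main obstacle'' you anticipate does not actually arise: in this paper a \emph{bounded} $n$-volume form is by definition one for which $|\omega|_{\ell_1}<\infty$ \emph{and} $|\omega|_{inf}=\inf_{x\in\R^m}\|\omega\|(x)>0$, so $\|\omega\|\circ f$ is already bounded below by a positive constant without any appeal to smoothness of $\omega$, non-vanishing, or a priori local boundedness of $f$ extracted from weak monotonicity. Once you use that two-sided bound, the estimate $|Df|^n\le c_0 K_f\,\star f^*\omega$ for a constant $c_0=c_0(\omega)$ is immediate and the rest of your first stage goes through verbatim.
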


Our next result gives some sufficient conditions in order for a finite distortion $\omega$-curve to have a continuous representative which is also almost everywhere differentiable and has Lusin's (N) property in the simple case of constant coefficient forms. We will call an $n$-volume form in $\R^m$,  $\omega=\sum_I \phi_I(x)dx_I$, where the sum is over all multi-indices $I=(i_1,\dots, i_n)$, $1\leq i_1< \dots< i_n\leq m$,   a \textit{constant coefficient form} when the functions $\phi_I$ are constant. Here $dx_I$ denotes the $n$-covector $dx_{i_1}\wedge\dots\wedge dx_{i_n}$.
\begin{theorem}\label{thm3/4}
	Let $f:\Omega\to \R^m$ be a finite distortion $\omega$-curve, for a constant coefficient $n$-volume form $\omega$ in $\R^m$. There exists a $c_2=c_2(n)>0$ such that if $\exp(\lambda K_f(x))\in L^1_{loc}$, for some $\lambda>c_2$, then $f$ has a continuous and almost everywhere differentiable representative. Moreover, this representative has Lusin's (N) property, that is, it maps sets of zero $n$-dimensional Lebesgue measure to sets of zero $n$-dimensional Lebesgue measure. 
\end{theorem}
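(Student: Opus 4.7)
The plan is to combine three ingredients: a pullback--Jacobian identity specific to constant-coefficient forms, weak monotonicity of each coordinate function (to invoke Theorem~\ref{thm0}), and self-improved higher integrability of $|Df|$ strong enough to yield a.e.\ differentiability and Lusin's (N) property. Writing $\omega=\sum_I c_I\, dx_I$ with constants $c_I$, we have $\|\omega\|\equiv\|\omega\|(0)$ constant, and $\star f^*\omega=\sum_I c_I\, J_{f_I}$, where $f_I=(f_{i_1},\dots,f_{i_n})$ denotes the sub-map indexed by $I$. The distortion inequality \eqref{eq03} therefore reduces to a pointwise bound of the form
\begin{equation}
|Df(x)|^n \le C\, K_f(x)\, \sum_I c_I\, J_{f_I}(x) \qquad \text{a.e.,}
\end{equation}
so that the analysis of $f$ reduces to estimates involving only the sub-Jacobians $J_{f_I}$.

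The first major step is to establish weak monotonicity of each coordinate $f_i$. I would follow the strategy of Onninen and Pankka for constant-coefficient quasiregular curves, adapted to the finite-distortion setting. The key is a Caccioppoli-type inequality obtained by testing with a truncation $u=\max(\min(f_i-t,s),0)$ times a cutoff $\eta\in C_0^\infty(B)$, and integrating $d(\eta^n u\, f^*\omega)$ using the pointwise bound above. Since $c_I J_{f_I}$ is exact, each boundary contribution vanishes; the estimate $J_{f_I}\le|Df|^n$ together with H\"older's inequality then absorbs the left-hand side, producing roughly $\int_B \eta^n|\nabla u|^n \lesssim \int_B K_f(x)|\nabla\eta|^n|u|^n$. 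Once $\lambda$ is large enough, Young's inequality in Orlicz form absorbs the distortion factor, giving weak monotonicity of $f_i$ at every threshold. Theorem~\ref{thm0} then delivers a continuous representative.

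The second major step is higher integrability, from which a.e.\ differentiability and Lusin's (N) property will follow. Feeding the Caccioppoli estimate into a Gehring-type self-improvement in Orlicz spaces (as in \cite[Chapter 7]{Iwaniec2001}), together with local integrability of $e^{\lambda K_f}$, should yield $|Df|^n\log^\beta(e+|Df|)\in L^1_{loc}$ with $\beta$ increasing linearly in $\lambda$. Provided $c_2(n)$ is chosen large enough that $\beta>n-1$, the resulting Orlicz--Sobolev regularity combined with the distortion bound forces $J_{f_I}>0$ a.e.\ on $\{|Df|>0\}$ for at least one $I$; standard arguments (see \cite[Chapters 4--5]{Hencl2014}, \cite[Section 7]{Iwaniec2001}) then produce a.e.\ differentiability via Stepanoff's theorem and Lusin's (N) via the area formula applied to each sub-map $f_I$, combined with the earlier continuity.

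The principal obstacle I anticipate is the Caccioppoli inequality on the coordinate functions: unlike the equidimensional case, the right-hand side of \eqref{eq03} controls $|Df|^n$ only through a \emph{sum} of sub-Jacobians, and one must argue that distinct multi-indices do not conspire to cancel after integration by parts. The constant-coefficient hypothesis is essential here, since it makes each term $c_I J_{f_I}$ exact as $c_I\, d(f_{i_1}\, df_{i_2}\wedge\dots\wedge df_{i_n})$ and eliminates commutators between $d$ and the coefficients; the value of $c_2(n)$ will be determined by the worst of the constants appearing in the absorption and in the self-improvement step.
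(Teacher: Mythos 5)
The first step of your plan---establishing weak monotonicity of each coordinate $f_i$---cannot work, and this is more than the ``principal obstacle'' you anticipate: it is an outright false starting point. Consider the smooth map $f(x,y) = (x,y,1-x^2-y^2)$ with $\omega = dx\wedge dy$. Then $\star f^*\omega \equiv 1$, $\|\omega\|\equiv 1$, and $|Df(z)|^2 = 1 + 4|z|^2$, so $f$ is a finite distortion $\omega$-curve with $K_f(z) = 1 + 4|z|^2$, which is continuous and hence locally bounded; in particular $\exp(\lambda K_f)\in L^1_{loc}$ for \emph{every} $\lambda>0$. But $f_3 = 1-x^2-y^2$ is not weakly monotone: on $B(0,1)$ it vanishes on $\partial B$ and equals $1$ at the origin. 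More structurally, the coordinate $f_3$ does not appear in $\star f^*\omega$ at all, so there is no exactness or cancellation to exploit in a Caccioppoli test on $f_3$; the distortion inequality bounds $|Df_3|$ only after the absorbing factor $K_f$ appears, which is exactly why weak monotonicity fails. The paper itself flags this in the discussion preceding Theorem~\ref{thm0} (``Their coordinate functions are not necessarily weakly monotone''), and Theorem~\ref{thm0} is a \emph{conditional} statement taking weak monotonicity as a hypothesis, not something one can manufacture from exponential integrability of $K_f$, however large $\lambda$ is.

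The second half of your plan is much closer to what is actually done, but you should drop the detour through monotonicity and Theorem~\ref{thm0} entirely. The paper proves a generalized weak reverse H\"older inequality for $\star f^*\omega$ (Lemma~\ref{grhi}, a Caccioppoli argument applied to the form, not to individual coordinates) and feeds it into a generalized Gehring lemma (Lemma~\ref{ggl}); together these yield Theorem~\ref{thmhigh}: $\star f^*\omega\log^a(e+\star f^*\omega)\in L^1_{loc}$ and $|Df|^n\log^{a-1}(e+|Df|)\in L^1_{loc}$ for all $a<c_3\lambda$. Choosing $\lambda>c_2$ large enough that one may take $a-1>n-1$ places $f$ in $W^{1,P}_{loc}$ with $P(t)=t^n\log^{p}(e+t)$, $p>n-1$, and a single application of Lemma~\ref{lemma1} (Kauhanen--Koskela--Mal\'y) then delivers continuity, a.e.\ differentiability, \emph{and} Lusin's (N) simultaneously; there is no need for Stepanoff's theorem, the area formula, or a separate continuity argument. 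One further point your sketch glosses over: the reverse H\"older step in Lemma~\ref{grhi} requires $f\in W^{1,n}_{loc}$ (it appears as an explicit hypothesis of Theorem~\ref{thmhigh}), which is stronger than the definitional $W^{1,1}_{loc}$, so this assumption must be supplied somewhere in the argument.
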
 
Our results in Theorems \ref{thm2} and \ref{thmlusin} essentially show that the integrability condition on the distortion cannot be relaxed.

Next we consider more general classes of forms.  We will call an $n$-volume form in $\R^m$,  $\omega=\sum_I \phi_I(x)dx_I$ \textit{bounded} if all the non-zero, real valued functions $\phi_I(x)$ are bounded in $\R^m$ and there is a constant $c>0$  such that, for all $x\in \R^m$, there is a multi index $I_x$ such that $|\phi_{I_x}(x)|>c$.  Notice that for a bounded form $\omega$, there is a constant $c>0$ so that $||\omega||(x)>c$, for all $x\in\R^m$.  We will use the symbols $|\omega|_{\ell_1}$, $|\omega|_{inf}$ to denote  \[\sup_{x\in\R^m}\sum_I |\phi_I(x)|\ \text{and}\ \inf\{||\omega||(x)\colon x\in\R^m\}\] respectively. When the functions $\phi_I$ are constant we say that $\omega$ is  a constant coefficient form. Notice that for bounded forms we have that $|\omega|_{\ell_1}<\infty$ and $|\omega|_{inf}>0$. For an $n$-volume form  $\omega=\sum_I \phi_I(x)dx_I$ we will also denote by $H_\omega$ the set of multi-indices $I$ such that $\phi_I(x)$ is not identically zero. 

We define now $E_f$ to be the following set
\begin{equation}\label{eq05}
	\begin{aligned}
		E_f=\{x\in\Omega\colon &\text{there exists a sequence}\ x_n\to x \ \text{such that}\ \\& \star f^*dx_I(x_n)\to\infty, \ \text{for some multi-index} \ I\in H_\omega\}.
	\end{aligned}
\end{equation} 
Notice that $\star f^*dx_I$ is a real valued function for all $I$.
For a bounded $n$-form $\omega$ we say that the pair $(f,\omega)$ satisfies \textit{condition (D)} if the set $E_f$ can be covered by a countable number of open balls $\{B_i\}_{i=1}^\infty$  and on each ball $B_i$ there exists a multi index $I_i$ such that $\star f^*dx_{I_i}\geq \max_I\{\star f^*dx_I\}$.

\begin{theorem}\label{thm1}
	Let $f:\Omega\to \R^m$ be a finite distortion $\omega$-curve, for a bounded $n$-volume form $\omega$ in $\R^m$, such that the pair $(f,\omega)$ satisfies condition (D). Suppose that $c=c(n,\omega)=\frac{n|\omega|_{\ell_1}}{c_1|\omega|_{inf}}$, where $c_1=c_1(n)>0$ constant. If $\exp(\lambda K_f(x))\in L^1_{loc}$, for some $\lambda>c$, then there is a continuous and almost everywhere differentiable representative of $f$. Moreover, this representative has Lusin's (N) property, that is, it maps sets of zero $n$-dimensional Lebesgue measure to sets of zero $n$-dimensional Lebesgue measure. 
\end{theorem}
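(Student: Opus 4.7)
My plan is to exploit condition~(D) to split $\Omega$ into two local regimes and reduce each to a situation already handled: the constant-coefficient Theorem~\ref{thm3/4} on one part, and classical Sobolev theory on the other. Write $U=\bigcup_{i\ge 1}B_i$ for the open cover of $E_f$ furnished by condition (D), and treat $\Omega\setminus U$ and each $B_i$ separately.

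On $\Omega\setminus U$ no point lies in $E_f$, so by the very definition of $E_f$ every $\star f^*dx_I$ is locally bounded. Using $|\star f^*\omega|\le |\omega|_{\ell_1}\max_I |\star f^*dx_I|$ and $\|\omega\|\circ f\ge |\omega|_{inf}>0$ in \eqref{eq03} yields a pointwise estimate $|Df|^n \le C(\omega)\,K_f$ on compact subsets, and since $K_f$ is exponentially integrable this places $|Df|$ in $L^q_{loc}(\Omega\setminus U)$ for every finite $q$. Classical Sobolev theory (Sobolev embedding into H\"older spaces and Stepanov's theorem) then provides continuity, a.e.\ differentiability and Lusin's (N) on $\Omega\setminus U$.

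On each $B_i$, condition~(D) gives a multi-index $I_i$ with $\star f^*dx_{I_i}\ge \star f^*dx_J$ for every $J\in H_\omega$. Writing $\star f^*\omega=\sum_J\phi_J(f)\,\star f^*dx_J$ and $\sum_J|\phi_J(f)|\le |\omega|_{\ell_1}$, one gets
\[
\star f^*\omega \;\le\; |\omega|_{\ell_1}\,\star f^*dx_{I_i}\qquad\text{a.e.\ on }B_i,
\]
so substituting into \eqref{eq03} and using $\|\omega\|\circ f\ge|\omega|_{inf}$ gives
\[
|Df|^n \;\le\; \frac{|\omega|_{\ell_1}}{|\omega|_{inf}}\,K_f\,\star f^*dx_{I_i}\qquad\text{a.e.\ on }B_i.
\]
Since $\|dx_{I_i}\|\equiv 1$, this is exactly the finite distortion inequality for $f|_{B_i}$ viewed as a $dx_{I_i}$-curve with distortion $\widetilde K_i=(|\omega|_{\ell_1}/|\omega|_{inf})\,K_f$, and $dx_{I_i}$ has constant coefficients. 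The hypothesis $\exp(\lambda K_f)\in L^1_{loc}$ with $\lambda>n|\omega|_{\ell_1}/(c_1|\omega|_{inf})$ transfers to $\exp(\widetilde\lambda\widetilde K_i)\in L^1_{loc}$ with $\widetilde\lambda=\lambda|\omega|_{inf}/|\omega|_{\ell_1}>n/c_1$. Choosing the constant $c_1=c_1(n)$ in the statement to equal $n/c_2(n)$, where $c_2(n)$ is the constant from Theorem~\ref{thm3/4}, forces $\widetilde\lambda>c_2(n)$, so Theorem~\ref{thm3/4} delivers a continuous, a.e.\ differentiable representative of $f|_{B_i}$ with Lusin's~(N).

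Finally, I would glue these local representatives: since any two continuous functions that agree almost everywhere on an open set agree pointwise there, the representatives on the $B_i$'s and on $\Omega\setminus U$ fit together to a continuous representative on all of $\Omega$. Almost everywhere differentiability and Lusin's~(N) are both local properties stable under countable unions, so they pass to $\Omega$. The main obstacle I foresee is verifying that the hypotheses of Theorem~\ref{thm3/4} are actually met on each $B_i$, namely that $\star f^*dx_{I_i}\in L^1_{loc}(B_i)$; I would deduce this from $|\star f^*dx_{I_i}|\le |Df|^n$ together with higher integrability of $|Df|^n$ on $B_i$ obtained from the inequality $|Df|^n\le (K_f/|\omega|_{inf})\star f^*\omega$, the exponential integrability of $K_f$, and $\star f^*\omega\in L^1_{loc}$ via a Young-type pairing. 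Matching the threshold constants $c_1(n)$ and $c_2(n)$ is the other bookkeeping point.
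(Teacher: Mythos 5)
Your decomposition into $\Omega\setminus U$ and the balls $B_i$, and the idea of dominating $\star f^*\omega$ by $\star f^*dx_{I_i}$ on each $B_i$, are in the right spirit, and the treatment of $\Omega\setminus U$ is essentially what the paper does (it appears inside the proof of Theorem~\ref{thm1/2}, not of Theorem~\ref{thm1} itself). But the step where you reduce to Theorem~\ref{thm3/4} on each ball has a genuine circularity. Theorem~\ref{thm3/4}, via Theorem~\ref{thmhigh} and Lemma~\ref{grhi}, needs $f\in W^{1,n}_{loc}$ on the ball, equivalently $\star f^*dx_{I_i}\in L^1_{loc}(B_i)$ (since $|\star f^*dx_{I_i}|\le|Df|^n$ by Hadamard). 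You propose to get this from $|Df|^n\le(K_f/|\omega|_{inf})\star f^*\omega$ together with $\exp(\lambda K_f)\in L^1_{loc}$ and $\star f^*\omega\in L^1_{loc}$ by a Young-type pairing. That pairing gives at best
\begin{equation}
K_f\,\star f^*\omega\;\le\;\frac{1}{\lambda}e^{\lambda K_f}+C\,\star f^*\omega\,\log\!\bigl(e+\star f^*\omega\bigr),
\end{equation}
and the second term is exactly the $L\log L$ higher integrability of $\star f^*\omega$ that Theorem~\ref{thm1/2} is trying to establish — so the argument assumes what it sets out to prove. Pairing the other way (absorbing $\log$ onto $K_f$) only yields $|Df|^n/\log(e+|Df|)\in L^1_{loc}$, which is strictly weaker than $W^{1,n}_{loc}$.

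The paper sidesteps this entirely. On each $B_i$ it never invokes the curve machinery again; instead it passes to the $n$-coordinate projection $f_{I_i}=(f_{j_1},\dots,f_{j_n})\colon B_i\to\R^n$, observes that the distortion inequality and condition~(D) make $f_{I_i}$ an ordinary \emph{map} of finite distortion with distortion function $\frac{|\omega|_{\ell_1}}{|\omega|_{inf}}K_f$, and applies Faraco–Koskela–Zhong (Lemma~\ref{lemma2}) directly. That lemma only needs $W^{1,1}_{loc}$, $J\in L^1_{loc}$ and exponential integrability of the distortion — no $W^{1,n}_{loc}$ hypothesis — and returns $J\log^a J$ and $|Dg|^n\log^{a-1}|Dg|$ in $L^1_{loc}$. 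After reassembling this on all balls and on $\Omega\setminus U$ (Theorem~\ref{thm1/2}), the continuity, a.e.\ differentiability and Lusin~(N) all follow in one stroke from the Kauhanen–Koskela–Mal\'y criterion (Lemma~\ref{lemma1}) for maps in $W^{1,P}$ with $P(t)=t^n\log^a(e+t)$, $a>n-1$; there is no need for a separate Sobolev/Stepanov argument on $\Omega\setminus U$. Finally, the "choose $c_1=n/c_2(n)$" step is not available to you: $c_1$ in the statement is a fixed constant coming from Lemma~\ref{lemma2}, and the threshold $c=n|\omega|_{\ell_1}/(c_1|\omega|_{inf})$ is forced by needing the exponent $a=c_1\lambda\,|\omega|_{inf}/|\omega|_{\ell_1}$ in Theorem~\ref{thm1/2} to exceed $n-1$ (in fact $>n-1$ with a margin to fit $p+1$), not by matching it to $c_2(n)$ from Theorem~\ref{thm3/4}.
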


\begin{remark}
	The constant $c_1$ in  Theorem \ref{thm1} comes from a well known result about the higher integrability if the Jacobian of mappings of finite distortion proven in \cite{Faraco2005}, see Lemma \ref{lemma2} in section \ref{prelim} for the precise statement. It is known that in two dimensions $c_1=1$ (see \cite[Theorem 20.4.12]{Astala2009}) but the exact value of this constant remains unknown in higher dimensions. Notice that when $n=2$ and the form $\omega=dx\wedge dy$  we have that the constant above is $c=2$. Thus Theorems \ref{thm2} and \ref{thmlusin} imply that Theorem \ref{thm1} is sharp in this  case.
\end{remark}
An immediate  corollary of the above theorems, which is worth pointing out, is the area formula (see \cite{StevenG.Krantz2008}). We note here that $\mathcal{H}^n$ denotes the $n$-dimensional Hausdorff measure in $\R^m$. 
\begin{corollary}[Area formula]
	Let $f$ be as in Theorem \ref{thm1} or \ref{thm3/4} then 
	\begin{equation}
		\int_E h(x)\sqrt{\det (Df^TDf)}dx=\int_{f(E)}\sum_{x\in f^{-1}(y)}h(x)d\mathcal{H}^n(y),
	\end{equation}
holds for all measurable functions $h:\R^n\to [0,\infty]$ and all measurable sets $E\subset\R^n$.

\end{corollary}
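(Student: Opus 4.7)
The plan is to invoke a standard area formula for Sobolev maps between Euclidean spaces of possibly different dimensions, after checking that its hypotheses have already been verified by the main theorems. No new analytic idea is needed beyond what Theorems \ref{thm1} and \ref{thm3/4} provide; the corollary amounts to packaging those conclusions in the language of Federer-type area formulas.

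First, I would fix the continuous representative of $f$ supplied by Theorem \ref{thm1} (or Theorem \ref{thm3/4}). By those theorems this representative is differentiable at almost every $x\in\Omega$ and has Lusin's (N) property, i.e.\ it maps $\mathcal{L}^n$-null sets in $\Omega$ to $\mathcal{H}^n$-null sets in $\R^m$. The same theorems, via the higher integrability results that underlie them, also give $f\in W^{1,n}_{loc}(\Omega,\R^m)$, so the pointwise $n$-dimensional Jacobian $J(x):=\sqrt{\det(Df(x)^T Df(x))}$ is locally integrable on $\Omega$.

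Next I would apply the Sobolev version of the area formula as presented in Krantz--Parks \cite{StevenG.Krantz2008}. Its key hypotheses are precisely almost everywhere differentiability together with Lusin's (N) property, both of which we have just collected. Specialised to $h=\chi_E$ it yields
\begin{equation}
\int_E \sqrt{\det(Df^TDf)}\,dx = \int_{f(E)} \#\bigl(f^{-1}(y)\cap E\bigr)\,d\mathcal{H}^n(y).
\end{equation}
The weighted formula in the corollary then follows by standard measure-theoretic bootstrapping: extend first to non-negative simple functions by linearity in $E$, and then to an arbitrary measurable $h\colon\R^n\to[0,\infty]$ via the monotone convergence theorem.

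I do not foresee a substantive obstacle, since the corollary is essentially a reformulation of Theorems \ref{thm1} and \ref{thm3/4}. The only point that merits a line of justification is the classical reduction to countably many Lipschitz pieces (obtained by Lusin-type truncation of Sobolev functions), which is automatic once one has a.e.\ differentiability, property (N), and local $L^n$-integrability of $|Df|$; all three are in place.
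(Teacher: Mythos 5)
Your proof is correct and is essentially the route the paper takes: both approaches simply invoke the Federer-type area formula for maps that are differentiable almost everywhere and satisfy Lusin's condition (N), with exactly those two hypotheses supplied by Theorems \ref{thm1} and \ref{thm3/4} (and the paper also cites \cite{StevenG.Krantz2008} for the formula). Your extra remarks about $W^{1,n}_{loc}$ regularity, the Lipschitz-piece decomposition, and the monotone-convergence bootstrapping are standard supporting details that the paper leaves implicit when calling this an ``immediate'' corollary.
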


Theorems \ref{thm1} and \ref{thm3/4} follow from the fact that curves of finite distortion satisfying the assumptions of  the theorems enjoy a higher than natural amount of regularity.  We state the precise results since they are of independent interest.
\begin{theorem}\label{thm1/2}
	Let $f:\Omega\to \R^m$ be a finite distortion $\omega$-curve, for a bounded $n$-volume form $\omega$ in $\R^m$, such that the pair $(f,\omega)$ satisfies condition (D).  If $\exp(\lambda K_f(x))\in L^1_{loc}(\Omega)$, for some $\lambda>0$, then there exists a constant $c_1=c_1(n)>0$ such that
	\begin{equation}
		\star f^*\omega\log^a(e+\star f^*\omega)\in L^1_{loc}(\Omega) \ \text{and} \ |Df(x)|^n\log^{a-1}(e+|Df(x)|)\in L^1_{loc}(\Omega),
	\end{equation}
 for  all $a<c_1\lambda \frac{|\omega|_{inf}}{|\omega|_{\ell_1}}$.
\end{theorem}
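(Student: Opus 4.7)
The overarching strategy is to use condition (D) to reduce the higher-integrability statement to the well-known sharp exponential higher-integrability theorem for the Jacobian of a mapping of finite distortion between equidimensional spaces (the Faraco--Koskela--Zhong result, stated in this paper as the result giving the constant $c_1$). Since $E_f$ is covered by the balls $\{B_i\}$, on $\Omega\setminus\bigcup_i B_i$ every $\star f^*dx_I$ is locally bounded, and by boundedness of $\omega$ and the distortion inequality $|\omega|_{inf}|Df|^n\le K_f\,\star f^*\omega$ both $\star f^*\omega$ and $|Df|^n$ are then locally bounded as well; the conclusions hold trivially there. Hence it suffices to work on a fixed ball $B_i$ from the cover.

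On $B_i$, introduce the coordinate projection $f_{I_i}=(f_{i_1},\dots,f_{i_n})\colon B_i\to\R^n$, noting that $J_{f_{I_i}}=\star f^*dx_{I_i}$ and that the operator norm can only decrease when restricting to a subset of coordinates, so $|Df_{I_i}|\le|Df|$. Condition (D) yields $\star f^*\omega\le|\omega|_{\ell_1}\,\star f^*dx_{I_i}$ on $B_i$, and combining with the curve distortion inequality gives
\begin{equation*}
|Df_{I_i}(x)|^n\le|Df(x)|^n\le\frac{|\omega|_{\ell_1}}{|\omega|_{inf}}\,K_f(x)\,J_{f_{I_i}}(x)\qquad\text{a.e.\ on }B_i.
\end{equation*}
Thus $f_{I_i}$ is a mapping of finite distortion on $B_i$ with distortion bounded by $\tfrac{|\omega|_{\ell_1}}{|\omega|_{inf}}K_f$, so in particular $\exp\!\bigl(\lambda\tfrac{|\omega|_{inf}}{|\omega|_{\ell_1}}K_{f_{I_i}}\bigr)\in L^1_{loc}(B_i)$.

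Applying the Faraco--Koskela--Zhong theorem to $f_{I_i}$ produces
\begin{equation*}
J_{f_{I_i}}\log^{a}(e+J_{f_{I_i}})\in L^1_{loc}(B_i)\qquad\text{for every }a<c_1\lambda\frac{|\omega|_{inf}}{|\omega|_{\ell_1}},
\end{equation*}
with $c_1=c_1(n)$ the dimensional constant of that theorem. Since $t\mapsto t\log^a(e+t)$ is nondecreasing for $t\ge 0$, the bound $\star f^*\omega\le|\omega|_{\ell_1}\,J_{f_{I_i}}$ from condition (D) transfers this immediately to $\star f^*\omega\log^{a}(e+\star f^*\omega)\in L^1_{loc}(B_i)$, which is the first assertion.

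For the gradient assertion, start from $|Df|^n\lesssim K_f\,J_{f_{I_i}}$ and take logarithms to obtain $\log(e+|Df|)\lesssim\log(e+K_f)+\log(e+J_{f_{I_i}})$. Multiplying back through one gets
\begin{equation*}
|Df|^n\log^{a-1}(e+|Df|)\lesssim K_f\,J_{f_{I_i}}\log^{a-1}(e+K_f)+K_f\,J_{f_{I_i}}\log^{a-1}(e+J_{f_{I_i}}).
\end{equation*}
Each product on the right will be split by a Young-type inequality of the form $st\le e^{\mu s}+\mu^{-1}(t\log t-t+1)$, sending one factor into the $\exp(\lambda K_f)$ reservoir and the other into the $J_{f_{I_i}}\log^{a}(e+J_{f_{I_i}})$ reservoir already controlled in the previous paragraph. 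The main obstacle is this last step: the parameter $\mu$ has to be calibrated so that neither reservoir is depleted, which is what forces the threshold $a<c_1\lambda|\omega|_{inf}/|\omega|_{\ell_1}$ to be the same one that governs $\star f^*\omega\log^a$, and accounts for the shift from $\log^a$ to $\log^{a-1}$ in the statement.
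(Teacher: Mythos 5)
Your reduction for the $\star f^*\omega$ part is sound and is essentially the paper's argument: use condition (D) to single out a dominant coordinate projection $f_{I_i}$ on each ball $B_i$, observe that $f_{I_i}$ is then a (genuine, equidimensional) mapping of finite distortion with $K_{f_{I_i}}\le\frac{|\omega|_{\ell_1}}{|\omega|_{inf}}K_f$, and feed this into the Faraco--Koskela--Zhong higher-integrability theorem. Away from $E_f$ you are also right that $\star f^*\omega$ is locally essentially bounded, but you overclaim when you say that $|Df|^n$ is then locally bounded as well: the distortion inequality only gives $|Df|^n\le\frac{1}{|\omega|_{inf}}K_f\,\star f^*\omega$, and $K_f$ need not be bounded. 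So the gradient conclusion is not ``trivial'' anywhere; it always requires an exponential-vs-logarithmic trade-off. The paper sidesteps this by running the gradient estimate over the whole compact set $K$ at once, using the already-established integrability of $\star f^*\omega\log^a(e+\star f^*\omega)$.

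The more serious issue is in your sketch of the gradient estimate. After writing $\log(e+|Df|)\lesssim\log(e+K_f)+\log(e+J_{f_{I_i}})$ and expanding, you are left with two terms, one of which is
\begin{equation}
K_f\,J_{f_{I_i}}\,\log^{a-1}(e+K_f).
\end{equation}
Your proposed Young-type inequality $st\le e^{\mu s}+\mu^{-1}(t\log t-t+1)$, with $s=K_f\log^{a-1}(e+K_f)$ and $t=J_{f_{I_i}}$, produces the term $\exp\bigl(\mu\,K_f\log^{a-1}(e+K_f)\bigr)$, which for $a>1$ grows strictly faster than $\exp(\lambda K_f)$ and so cannot be absorbed into the exponential reservoir for any $\mu>0$. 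The other obvious pairing ($s=K_f$, $t=J_{f_{I_i}}\log^{a-1}(e+K_f)$) does not decouple the variables. One can still salvage your decomposition, but only with a separate case analysis (splitting on whether $\log(e+J_{f_{I_i}})$ is larger or smaller than a multiple of $K_f$), which is not a consequence of the stated Young inequality. This is precisely what Lemma \ref{lemma3} in the paper is designed to do: it bounds the whole product $xy\log^{a-1}\!\bigl(e+(xy)^{1/n}\bigr)$ in one stroke by $\frac{C_1}{b}x\log^{a}(e+x^{1/n})+C_2\exp(by)$, with $x=\star f^*\omega$ and $y=\frac{|\omega|_{\ell_1}}{|\omega|_{inf}}K_f$, avoiding the need to split the logarithm and thereby avoiding the problematic mixed term altogether. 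You should either invoke that lemma (or the closely related Lemma \ref{lemmaineq}) directly, or supply the case analysis that your decomposition implicitly requires.
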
 

	\begin{theorem}\label{thmhigh}
	Let $f:\Omega\to \R^m$ be a finite distortion $\omega$-curve, where $\omega$ is an $n$-volume form with constant coefficients.  If $\exp(\lambda K_f(x))\in L^1_{loc}(\Omega)$, for some $\lambda>0$, and $f\in W^{1,n}_{loc}(\Omega,\R^m)$ then there is a $c_3=c_3(n)>0$ such that
	\begin{equation}
		\star f^*\omega\log^a(e+\star f^*\omega)\in L^1_{loc}(\Omega) \ \text{and} \ |Df(x)|^n\log^{a-1}(e+|Df(x)|)\in L^1_{loc}(\Omega),
	\end{equation}
	for all $a< c_3\lambda$.
\end{theorem}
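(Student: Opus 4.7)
The plan is to mirror the strategy of Theorem~\ref{thm1/2}, replacing the use of condition~(D) with the a priori regularity $f\in W^{1,n}_{loc}$. Since $\omega=\sum_I c_I\,dx_I$ has constant coefficients, it is closed, so the identity
\[
\star f^*\omega \;=\; \sum_{I\in H_\omega} c_I\,\det D(\pi_I\circ f)
\]
holds pointwise a.e., where $\pi_I\colon\R^m\to\R^n$ is the coordinate projection onto the indices in $I$ and each $\pi_I\circ f\in W^{1,n}_{loc}(\Omega,\R^n)$. In this regime the classical Jacobian identities (divergence form, Hardy space $\mathcal{H}^1$ structure) become available and play the role of the dominant-minor reduction used in the proof of Theorem~\ref{thm1/2}.

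The first step is a Caccioppoli-type inequality for $\star f^*\omega$. Writing each Jacobian in divergence form and integrating against a cutoff $\eta\in C_0^\infty(B(x_0,2r))$ with $\eta\equiv 1$ on $B(x_0,r)$, integration by parts followed by H\"older's inequality gives
\[
\int \eta^n\,(\star f^*\omega)\,dx \;\leq\; C(n,|\omega|_{\ell_1})\int \eta^{n-1}|\nabla\eta|\,|f-f_B|\,|Df|^{n-1}\,dx,
\]
where $f_B$ is the mean of $f$ on $B(x_0,2r)$. The hypothesis $f\in W^{1,n}_{loc}$ is precisely what justifies this integration by parts simultaneously for every $I\in H_\omega$ and lets us sum the resulting identities without losing regularity.

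Next I would couple this Caccioppoli inequality with the distortion bound $|\omega|_{inf}|Df|^n\leq K_f(\star f^*\omega)$, apply Young's inequality in Orlicz form to split $K_f$ against $\star f^*\omega$, and use the exponential integrability $e^{\lambda K_f}\in L^1_{loc}$ via the duality $st\leq e^s+t\log t$. A Gehring-type self-improving iteration in the spirit of the Faraco-Koskela-Zhong proof underlying Lemma~\ref{lemma2} then upgrades the $L^1_{loc}$ bound to
\[
\star f^*\omega\,\log^a(e+\star f^*\omega)\in L^1_{loc}
\]
for every $a<c_3\lambda$, with $c_3=c_3(n)$ extracted from the sharp constants in the Young-type step. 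Reinserting the distortion inequality yields the companion estimate for $|Df|^n\log^{a-1}(e+|Df|)$.

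The main obstacle is that the signed sum $\sum_I c_I\det D(\pi_I\circ f)$ need not be nonnegative term-by-term; only the full sum $\star f^*\omega$ is controlled from below via the distortion inequality, so one cannot simply apply Lemma~\ref{lemma2} to a single projection $\pi_I\circ f$. Hence the Caccioppoli-Young machinery must be executed on the sum directly, and the logarithmic gain must be book-kept so that it collects into a single exponent proportional to $\lambda$ rather than deteriorating with $|H_\omega|$; this is where the $W^{1,n}_{loc}$ hypothesis, rather than condition~(D), does the essential work.
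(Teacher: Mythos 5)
Your proposal is correct and takes essentially the same route as the paper. The paper's proof of this theorem consists of exactly the two ingredients you describe: Lemma~\ref{grhi} is the Caccioppoli-type inequality for $\star f^*\omega$ (obtained by the divergence-form integration by parts on all of $f^*\omega$ at once, as you note, and relying on $f\in W^{1,n}_{loc}$ to justify the identity), combined with the distortion inequality to yield a weak reverse H\"older inequality for $\star f^*\omega$; and Lemma~\ref{ggl} is precisely the Gehring-type self-improvement in the Faraco--Koskela--Zhong style, run directly on $\star f^*\omega$ rather than on a single minor, which produces the $\log^a$ gain with $a<c_3\lambda$. Your observation that the signed sum $\sum_I c_I\det D(\pi_I\circ f)$ cannot be controlled term-by-term, and that the iteration must therefore be carried out on the full pullback, is exactly why the paper introduces a generalized Gehring lemma instead of simply invoking Lemma~\ref{lemma2} as in the proof of Theorem~\ref{thm1/2}. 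The final step -- recovering the $|Df|^n\log^{a-1}(e+|Df|)$ bound by reinserting the distortion inequality and splitting $K_f\cdot(\star f^*\omega)$ via an Orlicz--Young inequality -- also matches the paper, which uses Lemma~\ref{lemmaineq} for this.
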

 It is also interesting to point out that the higher integrability implies also modulus of continuity estimates. This can be achieved through an embedding result for Orlicz-Sobolev spaces of Donaldson and Trudinger \cite[Theorem 3.6]{Donaldson1971}, see also \cite[Theorem 8.40]{RobertA.Adams2003}.
 \begin{theorem}\label{thmmodulus}
 	Let $f:\Omega\to \R^m$ be a finite distortion $\omega$-curve, where $\omega$ is either an $n$-volume form with constant coefficients or a bounded $n$-volume form such that the pair $(f,\omega)$ satisfies condition (D). Then there exists a constant $q=q(n,\omega)>0$ such that if $\exp(\lambda K_f(x))\in L^1_{loc}(\Omega)$, for some $\lambda>q$ then for every compact subdomain $F\subset \Omega$ and every $x,y\in F$ we have
 	\begin{equation}\label{eqmodulus}
 		|f(x)-f(y)|\leq Q ||f||_{W^{1,P}}\int_{|x-y|^{-n}}^\infty \frac{P^{-1}(t)}{t^{(n+1)/n}}dt,
 	\end{equation} 
 where $P(t)=t^n\log ^a(e+t)$, $a>n$, $Q=Q(n,F)>0$ constant and $||f||_{W^{1,P}}=||f||_P+||Df||_P$.
 \end{theorem}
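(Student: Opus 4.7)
The plan is to combine the higher integrability Theorems \ref{thm1/2} and \ref{thmhigh} with the Donaldson-Trudinger Orlicz-Sobolev embedding cited in the statement. Fix a Young function $P(t)=t^n\log^a(e+t)$ with $a>n$; then $P^{-1}(t)\sim t^{1/n}/(\log t)^{a/n}$ for large $t$, so $\int_1^\infty P^{-1}(t)/t^{(n+1)/n}\,dt$ converges, which is precisely the hypothesis ensuring that \cite[Theorem 3.6]{Donaldson1971} (equivalently \cite[Theorem 8.40]{RobertA.Adams2003}) applies. That theorem states: for any $g\in W^{1,P}_{loc}(\Omega,\R^m)$ and any compact subdomain $F\subset\Omega$, the pointwise estimate \eqref{eqmodulus} holds for all $x,y\in F$ with a constant $Q=Q(n,F)$. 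Hence it suffices to verify that $f\in W^{1,P}_{loc}(\Omega,\R^m)$ for some admissible $a>n$ and that $\|f\|_{W^{1,P}(F)}<\infty$.

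For the derivative, Theorem \ref{thm1/2} (in the bounded-form case with condition (D)) and Theorem \ref{thmhigh} (in the constant-coefficient case) yield $|Df|^n\log^{b-1}(e+|Df|)\in L^1_{loc}$ for every $b$ strictly less than the threshold $c_1\lambda|\omega|_{inf}/|\omega|_{\ell_1}$, respectively $c_3\lambda$. Setting $b=a+1$ transforms this into $|Df|^n\log^a(e+|Df|)\in L^1_{loc}$, i.e., $Df\in L^P_{loc}$, whenever $a+1$ lies below the respective threshold. To admit some $a>n$ we therefore need $\lambda$ large enough that the threshold exceeds $n+1$, and to guarantee continuity of $f$ we also need $\lambda$ above the threshold of Theorem \ref{thm1} or \ref{thm3/4}. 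Concretely, take
$$q=\max\left\{c,\frac{(n+1)|\omega|_{\ell_1}}{c_1|\omega|_{inf}}\right\}$$
in the bounded-form case and $q=\max\{c_2,(n+1)/c_3\}$ in the constant-coefficient case; clearly $q=q(n,\omega)$.

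For $\lambda>q$, Theorem \ref{thm1} or \ref{thm3/4} supplies a continuous representative of $f$, which is bounded on the compact $F$ and therefore lies trivially in $L^P(F)$. Combined with $Df\in L^P(F)$ this gives $f\in W^{1,P}(F)$ with finite norm $\|f\|_{W^{1,P}(F)}=\|f\|_P+\|Df\|_P$, and applying the Donaldson-Trudinger embedding to this representative yields \eqref{eqmodulus}. The main obstacle is merely the bookkeeping between the exponent $a-1$ produced by the higher integrability results and the exponent $a$ required by the target Young function $P$, together with the corresponding calibration of $q$; beyond this small shift, the argument is a direct combination of two external inputs.
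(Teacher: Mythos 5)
Your overall strategy is exactly the paper's: reduce to showing $f\in W^{1,P}_{loc}$ with $P(t)=t^n\log^a(e+t)$ for some $a>n$, feed this into the Donaldson--Trudinger embedding, and calibrate $q$ so that the higher integrability of Theorem \ref{thm1/2} (resp.\ Theorem \ref{thmhigh}) supplies the exponent $a>n$ after the bookkeeping shift $b=a+1$. Your explicit threshold computation and your observation that continuity (via Theorem \ref{thm1} or \ref{thm3/4}) gives $f\in L^P(F)$ for free are both correct and in the spirit of the paper's argument.

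However, there is a gap in the step where you invoke \cite[Theorem 3.6]{Donaldson1971}: you assert that the theorem directly yields \eqref{eqmodulus} for \emph{any} compact subdomain $F\subset\Omega$ with $Q=Q(n,F)$, but the embedding theorem is stated for domains with suitable geometry (a ball, a cube, or a cone-condition domain), not arbitrary compact subdomains. The paper is careful about this: it first applies the embedding only when $F$ is a ball compactly contained in $\Omega$, and then passes to general compact $F$ by a chaining argument --- cover a polygonal path from $x$ to $y$ by finitely many balls $B_1,\dots,B_k\subset\Omega$, pick intermediate points $x_i\in B_i\cap B_{i+1}$, apply the ball case on each $B_i$, and sum using the triangle inequality together with $|x_i-x_{i+1}|^{-n}\geq|x-y|^{-n}$ (which keeps the modulus controlled by $|x-y|$ alone). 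Without this step, or some substitute for it, the claimed constant $Q(n,F)$ for a general compact $F$ is not justified by the cited embedding. The fix is routine, but it is a step your proof needs and currently omits.
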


The rest of the paper is organized as follows. In section \ref{prelim} we introduce the necessary notation and terminology and we recall results from the theory of finite distortion maps that we are going to need. In section \ref{higherint} we prove the higher integrability results of Theorems \ref{thm1/2} and  \ref{thmhigh} while in section \ref{proofs} we give the proofs of Theorems \ref{thm1} and \ref{thm3/4}. In section \ref{monotonicity} we prove Theorem \ref{thm0}. Finally, in section \ref{counterexamples} we construct the functions of Theorems \ref{thm2} and \ref{thmlusin}.

\begin{acknowledgements}
	We would like to thank Pekka Pankka for a question which motivated this work and for many interesting discussions and comments. 
	\end{acknowledgements}

	\section{Preliminaries on Orlicz-Sobolev spaces and finite distortion maps}\label{prelim}

	Here we collect results and terminology from Orlicz-Sobolev spaces and finite distortion maps that we are going to need for the proofs of our results, we refer to \cite{Iwaniec2001, RobertA.Adams2003} for more details. 
	
	First we need to introduce the \textit{Orlicz spaces} and the \textit{Orlicz-Sobolev spaces}. An Orlicz function is a continuous and increasing function $P:[0,\infty)\to[0,\infty)$ with $P(0)=0$ and $\lim_{t\to\infty}P(t)=\infty$. We will also assume that the function $P$ is convex. The Orlicz space $L^P(\Omega)$ consists of all measurable functions $u\colon \R^n\to\mathbb{R}$ such that 
	\begin{equation}\label{eq11}
	||u||_P:=\int_\Omega P(\lambda |u|)<\infty, \ \text{for some}\ \lambda=\lambda(u)>0.
	\end{equation}
The functional $||\cdot||_P$, which is usually called the Luxemburg functional, is a norm in $L^P$ and in fact $L^P$ is a Banach space with this norm.

The Orlicz space $L^P$ for $P(t)= t^p\log^a(e+t)$, $1\leq p< \infty$, $a>1-n$ will be denoted as $L^p\log^a L(\Omega)$. 

For a an Orlicz function $P$ now, the Orlicz-Sobolev space $W^{1,P}(\Omega)$ is simply the set of functions that belong in $W^{1,1}_{loc}(\Omega)$ and their weak partial derivatives are also in $L^P(\Omega)$. It is easy to see that the vector valued versions of the above spaces are simply the maps whose coordinate functions are in the  corresponding  space of real valued functions.

For the proof of Theorem \ref{thm0} we require the following result, see \cite[Theorem 7.5.1]{Iwaniec2001}.

\begin{lemma}\label{lemma0}
	Let $u\in W^{1,P}(\Omega)$, where $P(t)=t^n/ \log (e+t)$. If $u$ is a weakly monotone function, then $u$ has a continuous representative.  
\end{lemma}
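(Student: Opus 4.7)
The plan is to prove the lemma by establishing a uniform modulus of continuity for $u$ on compact subsets of $\Omega$, using a spherical slicing argument that couples the weak monotonicity hypothesis with an Orlicz--Sobolev embedding on $(n-1)$-dimensional spheres. This is the classical strategy of Manfredi and Iwaniec--Koskela--Onninen, specialised to the borderline Orlicz space $L^P$ with $P(t)=t^n/\log(e+t)$.

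First I would localise: fix a compact $F\subset\Omega$ and a ball $B(x_0,R)\subset\Omega$ with $x_0\in F$. Writing the integral $\int_B P(|Du|)\,dx$ in polar coordinates around $x_0$ and applying Fubini, for almost every $r\in(0,R)$ the trace $u_r:=u|_{\partial B(x_0,r)}$ lies in the Orlicz--Sobolev space $W^{1,P}(\partial B(x_0,r))$ when the sphere is viewed as an $(n-1)$-dimensional Riemannian manifold, with surface-gradient controlled by the restriction of $|Du|$.

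Next I would apply an Orlicz--Sobolev embedding on the sphere. Since $P(t)=t^n/\log(e+t)$ grows faster than $t^{n-1}$, the corresponding Sobolev class on the $(n-1)$-sphere embeds continuously into $C(\partial B(x_0,r))$, and a quantitative form of the embedding (of Donaldson--Trudinger type, in the spirit of the result cited for Theorem \ref{thmmodulus}) yields an oscillation estimate of the form
\begin{equation}
\Phi\bigl(\osc_{\partial B(x_0,r)} u\bigr)\leq C_n\,r\int_{\partial B(x_0,r)}\frac{|Du|^n}{\log(e+|Du|)}\,d\mathcal{H}^{n-1},
\end{equation}
for a suitable convex $\Phi$ with $\Phi(t)/t\to\infty$. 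Weak monotonicity applied with $M=\esssup_{\partial B(x_0,r)} u$ and $m=\essinf_{\partial B(x_0,r)} u$ in the definition recalled in the introduction then gives $\osc_{B(x_0,r)} u\leq\osc_{\partial B(x_0,r)} u$ for a.e. admissible $r$.

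Finally I would integrate in $r$. Multiplying the previous inequality by $dr/r$ and integrating from $r_1$ to $R$, Fubini converts the right-hand side into the ball integral $\int_{B(x_0,R)}|Du|^n/\log(e+|Du|)\,dx$, which is finite by the assumption $u\in W^{1,P}(\Omega)$. Since $r\mapsto\osc_{B(x_0,r)} u$ is monotone non-decreasing, a logarithmic rearrangement produces
\begin{equation}
\Phi\bigl(\osc_{B(x_0,r_1)} u\bigr)\log(R/r_1)\leq C_n\int_{B(x_0,R)}\frac{|Du|^n}{\log(e+|Du|)}\,dx,
\end{equation}
and letting $r_1\to0$ forces $\osc_{B(x_0,r_1)} u\to 0$, uniformly for $x_0\in F$. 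This uniform modulus of continuity produces the continuous representative.

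The main obstacle is the quantitative Orlicz--Sobolev embedding on the sphere with the correct log-power: the exponent $P(t)=t^n/\log(e+t)$ is precisely borderline in the sense that both the continuity of $u_r$ on $\partial B(x_0,r)$ and the convergence of the $dr/r$ integral depend on having exactly this logarithmic improvement over $t^{n-1}$. A weaker Orlicz function would either lose continuity on individual spheres or cause the final $r$-integration to diverge. Once this sharp embedding is in hand, the remainder of the argument is essentially book-keeping with Fubini and the monotonicity of the oscillation, and matches the treatment of \cite[Theorem 7.5.1]{Iwaniec2001} cited in the statement.
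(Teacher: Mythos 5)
The paper does not prove this lemma at all; it is quoted directly from \cite[Theorem 7.5.1]{Iwaniec2001}, and your plan reconstructs precisely the argument behind that theorem: polar slicing to reduce to traces on $(n-1)$-spheres, a borderline Orlicz--Sobolev oscillation estimate on a.e.\ good sphere, the transfer $\osc_{B_r}u\le\osc_{\partial B_r}u$ from weak monotonicity, and the $dr/r$ integration producing the $\log(R/r_1)$ factor that forces the oscillation to zero. The architecture is sound, and you correctly flag the sphere oscillation estimate as the crux. Two details are worth tightening. First, that estimate does not drop out of a generic Donaldson--Trudinger embedding: one needs the scaled, quantitative form, roughly
\begin{equation}
\bigl(\osc_{\partial B_r}u\bigr)^n\;\leq\; C(n)\,r\,\log^{n-1}\!\Bigl(e+\tfrac{L}{\osc_{\partial B_r}u}\Bigr)\int_{\partial B_r}\frac{|Du|^n}{\log(e+|Du|)}\,d\mathcal H^{n-1},
\end{equation}
with $L$ a local oscillation bound, so your $\Phi$ is not a single Orlicz function independent of the data, and after absorbing $L$ it satisfies $\Phi(t)/t\to 0$ as $t\to0^+$ rather than $\Phi(t)/t\to\infty$; all that is actually used is that $\Phi$ is positive and increasing near $0$. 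One also needs local boundedness of $u$ to control $L$, which follows from the same slicing-plus-monotonicity argument on a single sphere. Second, invoking weak monotonicity on $B_r$ requires that $u|_{\partial B_r}$ already have a continuous representative, so that $|M-u|-|u-m|+2u-m-M\in W^{1,1}_0(B_r)$ with $M=\sup_{\partial B_r}u$, $m=\inf_{\partial B_r}u$; this is exactly what the sphere embedding gives for a.e.\ $r$, so the logical order (embedding first, then monotonicity) should be made explicit rather than implicit.
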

For the proof of Theorem \ref{thm1} we are going to need the following lemma which is an amalgamation of Theorem B and C in \cite{Kauhanen1999}.
\begin{lemma}\label{lemma1}
	Let $g:\Omega\to\mathbb{R}^m$ be a function in the Orlicz-Sobolev space $W^{1,P}(\Omega,\R^m)$, where $P(t)=t^n \log^a(e+t)$ with $a>n-1$. Then $g$ has a continuous and almost everywhere differentiable representative. Moreover, this representative has Lusin's (N) property.	
\end{lemma}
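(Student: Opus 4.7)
The statement collects three distinct regularity properties of $g$. I would prove each using the observation that $P(t) = t^n \log^a(e+t)$ with $a > n-1$ lies at the sharp threshold for Orlicz-Sobolev embedding into continuous functions, so one expects the arguments of classical $W^{1,n}$-theory to carry over provided the logarithmic gain is tracked carefully.

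For the continuous representative, I would represent $g$ via the Riesz potential of $|Dg|$, obtaining (after a standard mollification argument) the pointwise estimate
\[|g(x) - g_{B(x,r)}| \leq C\int_0^r s^{-n}\int_{B(x,s)}|Dg(y)|\, dy \, ds,\]
where $g_{B(x,r)}$ denotes the average of $g$ on $B(x,r)$. Applying H\"older's inequality in Orlicz spaces, one uses the asymptotic $P^{-1}(t) \sim t^{1/n}(\log t)^{-a/n}$ for large $t$ to estimate the inner $L^1$-norm by the $L^P$-norm of $|Dg|$ times a factor involving $s$. The remaining integral in $s$ converges precisely when $a > n-1$, giving continuity of the Lebesgue representative $g^*$ together with a quantitative modulus of continuity.

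For almost everywhere differentiability, once continuity is in hand the bound $|Dg|^n\log^a(e+|Dg|) \in L^1_{loc}$ combined with a Calder\'on-Zygmund argument applied at Lebesgue points of $Dg$ gives
\[\frac{1}{|B(x,r)|}\int_{B(x,r)} |g(y) - g^*(x) - Dg(x)(y-x)| \, dy = o(r)\]
for a.e.\ $x$, and standard arguments promote this to classical differentiability. For Lusin's (N), fix $E\subset\Omega$ with $|E|=0$ and $\varepsilon>0$. Cover $E$ by balls $B_i = B(x_i,r_i)$ with $\sum r_i^n < \varepsilon$. The continuity estimate, localized to each $B_i$, yields a bound of the form
\[\diam(g(B_i))^n \leq C\, r_i^n\, \Psi\!\left(\frac{1}{|B_i|}\int_{B_i} P(|Dg|)\right)\]
for a suitable increasing function $\Psi$ controlling the rearrangement of the Riesz potential in the Orlicz norm. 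Summing over $i$ and invoking absolute continuity of the integral $\int P(|Dg|)$ on the union of the balls concludes $\mathcal{H}^n(g(E)) = 0$.

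The main obstacle is the critical endpoint $a = n-1$, where the key integral above diverges and continuity genuinely fails. Thus the entire argument rests on quantifying how the additional $\log^a$-gain compensates for just missing the $W^{1,n+\epsilon}$ embedding, which in practice requires Adams/Trudinger-type exponential inequalities and a careful rearrangement analysis rather than the simpler Morrey-type power estimates that suffice in the supercritical range $a\geq n$.
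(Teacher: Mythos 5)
The paper does not actually prove this lemma from scratch: it deduces it by citing Theorems B and C of Kauhanen--Koskela--Mal\'y \cite{Kauhanen1999}, which establish continuity, a.e.\ differentiability, and Lusin's (N) property under the hypothesis $|Dg|\in L^{n,1}_{loc}$ (a Lorentz space), together with the embedding $L^n\log^a L\subset L^{n,1}$ valid for $a>n-1$ on sets of finite measure. Your sketch attempts a direct Orlicz-space proof, which is a legitimate alternative route, but it has a quantitative error at the crucial step.

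The gap is in the continuity argument. You estimate
\[
|g(x)-g_{B(x,r)}|\lesssim \int_0^r s^{-n}\int_{B(x,s)}|Dg|\,dy\,ds,
\]
and then apply the Orlicz--H\"older inequality \emph{shell by shell}: for each $s$ you bound $\int_{B(x,s)}|Dg|$ by $\|Dg\|_{L^P}\,\|1\|_{L^{\tilde P}(B(x,s))}$. Computing the conjugate Young function of $P(t)=t^n\log^a(e+t)$ gives $\tilde P^{-1}(u)\approx u^{(n-1)/n}(\log u)^{a/n}$, hence $\|1\|_{L^{\tilde P}(B(x,s))}\approx s^{n-1}\bigl(\log(1/s)\bigr)^{-a/n}$. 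The $s$-integral then becomes
\[
\int_0^r s^{-1}\bigl(\log(1/s)\bigr)^{-a/n}\,ds,
\]
which converges if and only if $a>n$, not $a>n-1$. This matches the Donaldson--Trudinger threshold that the paper itself invokes for the modulus-of-continuity statement (Theorem \ref{thmmodulus} requires $a>n$), and it is exactly why you cannot simultaneously claim the sharper threshold $a>n-1$ \emph{and} a quantitative modulus of continuity, as your sketch does. The shell-by-shell H\"older is wasteful; to reach $a>n-1$ one must either apply H\"older once, directly to the unsplit Riesz kernel $|x-y|^{-(n-1)}$ over the whole ball (which does lie in $L^{\tilde P}(B)$ precisely for $a>n-1$), or, more robustly, pass to the Lorentz duality $\bigl(L^{n,1}\bigr)'=L^{n/(n-1),\infty}$ and exploit the fact that $|x-y|^{-(n-1)}$ has a bound in $L^{n/(n-1),\infty}$ independent of $x$. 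The latter is the route of Kauhanen--Koskela--Mal\'y and of Stein before them, and it yields continuity without any uniform modulus in the range $n-1<a\leq n$. The a.e.\ differentiability and Lusin (N) portions of your sketch inherit the same issue: the covering and Calder\'on--Zygmund steps you describe must be run through the $L^{n,1}$ machinery (truncation and rearrangement, as in \cite{Kauhanen1999}) rather than through a shell-wise Orlicz--H\"older bound.
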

We note that the aforementioned theorems in \cite{Kauhanen1999} are stated about functions in  the Lorentz space. However, there is an equivalent way of defining Lorentz spaces using Orlicz spaces which we have used in the above lemma. We refer to the discussion preceding Theorems B and C in \cite{Kauhanen1999} for more details.

Due to the above Lemma, to prove Theorem \ref{thm1} it is enough to show that the coordinate functions $f_i$ of our finite distortion $\omega$-curve $f$ are in the right Orlicz-Sobolev space. To prove  that, we require another important result about the higher integrability of finite distortion maps proven in \cite{Faraco2005}, compare this with Theorems \ref{thm1/2} and \ref{thmhigh}.
\begin{lemma}\label{lemma2}
	Let $g\colon\Omega\to\R^n$, $n\geq 2$ be a mapping of finite distortion. Assume that the distortion function $K_g(x)$ satisfies $\exp(\beta K_g)\in L^1_{loc}$, for some $\beta>0$. Then there is a constant $c_1=c_1(n)$, $0<c_1<1$ such that
	\begin{equation}\label{eq12}
	J_g(x)\log^a(e+J_g(x))\in L^1_{loc}(\Omega) \ \text{and} \ |Dg(x)|^n\log^{a-1}(e+|Dg(x)|)\in L^1_{loc}(\Omega)
	\end{equation} 
for all $ a< c_1\beta$.			
	\end{lemma}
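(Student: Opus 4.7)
The plan is to pair the distortion inequality $|Dg|^n \le K_g J_g$ with the exponential integrability of $K_g$ through Young's inequality, and to combine this with a self-improving bound for the Jacobian. For the complementary Orlicz pair $\Phi(s)=e^s-1$ and $\Psi(t)=(e+t)\log(e+t)-e$, the pointwise Young inequality gives
\begin{equation}
K_g(x) J_g(x) \le \frac{1}{\beta}\, e^{\beta K_g(x)} + \frac{1}{\beta}\, J_g(x)\log\bigl(e + J_g(x)\bigr),
\end{equation}
together with weighted analogues carrying an extra $\log^{a-1}(e+J_g)$ factor. Consequently, once $J_g \log^{a}(e+J_g)\in L^1_{loc}$ is known, the gradient bound $|Dg|^n\log^{a-1}(e+|Dg|)\in L^1_{loc}$ follows from the same splitting applied to $|Dg|^n\le K_g J_g$. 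The problem therefore reduces to proving the logarithmic higher integrability of the Jacobian.

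For the Jacobian, the strategy is first to establish a weak reverse H\"older inequality for $J_g$ using div--curl / Hardy-space regularity, and then to self-improve it to the logarithmic Orlicz scale via a Calder\'on--Zygmund stopping-time argument. Concretely, the Coifman--Lions--Meyer--Semmes theorem places $J_g$ in the local Hardy space $\mathcal H^1_{loc}$, and this yields a subcritical estimate
\begin{equation}
\frac{1}{|B|}\int_B J_g\,dx \le C\left(\frac{1}{|2B|}\int_{2B}|Dg|^q\,dx\right)^{n/q}
\end{equation}
for some $q<n$ depending only on dimension; combining it with the Sobolev--Poincar\'e inequality produces a genuine reverse H\"older inequality at the level of $J_g^{1-\varepsilon}$. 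A stopping-time decomposition at each height $t$ then leads, after absorbing the factor $K_g$ from the distortion inequality into the bad set, to a distribution-function estimate roughly of the form
\begin{equation}
\bigl|\{J_g > t\} \cap B\bigr| \le C t^{-1}\int_{\{K_g > c_0\log t\}\cap 2B} J_g\,dx + \text{l.o.t.},
\end{equation}
and the exponential integrability of $K_g$ bounds the measure of $\{K_g>c_0\log t\}$ by $Ct^{-c_0\beta}$. Integrating this inequality against the weight $\log^{a-1}(e+t)$ produces $J_g\log^a(e+J_g)\in L^1_{loc}$ for every $a<c_1\beta$, with $c_1=c_1(n)\in(0,1)$ arising from the subcritical exponent $q$ and the constants in the Calder\'on--Zygmund decomposition.

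The main obstacle is arranging the self-improvement so that the gain is \emph{linear in $\beta$}, i.e.\ $a<c_1\beta$, rather than merely a polynomial $L^{1+\delta}$ improvement, which is all a plain Gehring lemma produces. The polynomial gain loses every quantitative trace of $\beta$ and is too weak for what is required. The linear gain forces one to carry out the stopping-time argument at the borderline exponent $p=1$ and to calibrate precisely how the exceptional set $\{K_g>c_0\log t\}$ is paired, through Young's inequality, against the logarithmic weight in the level-set integration. This calibration, together with a truncation/approximation step needed to make the Hardy-space estimate rigorously available at the borderline $W^{1,n}$ regularity of $g$, is the technical heart of the argument; everything else, including the transfer from $J_g$ to $|Dg|^n$, reduces to the Young inequality splitting described in the first paragraph.
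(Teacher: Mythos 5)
The paper does not prove this lemma in text; it is cited verbatim from Faraco--Koskela--Zhong \cite{Faraco2005}. The relevant machinery from that proof is, however, reconstructed (in generalized form) in Lemma \ref{ggl} together with Lemmas \ref{lemmaineq} and \ref{grhi}, so that is what your sketch should be compared against. Your high-level plan -- derive a weak reverse H\"older inequality for $J_g$, self-improve it to the logarithmic Orlicz scale while tracking $\beta$ linearly, and then transfer from $J_g$ to $|Dg|^n$ by a Young-type splitting -- matches the shape of the FKZ argument, and your final paragraph correctly identifies the crux: one needs $a < c_1 \beta$, not a bare $L^{1+\delta}$ gain. The transfer step via the inequality in Lemma \ref{lemmaineq} is exactly what the paper uses.

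However, the route you sketch for the two main steps diverges from the paper and has gaps. For the reverse H\"older inequality, you invoke Coifman--Lions--Meyer--Semmes to place $J_g$ in $\mathcal{H}^1_{loc}$, but that requires $Dg \in L^n_{loc}$ a priori, which is \emph{not} among the hypotheses (a mapping of finite distortion is only $W^{1,1}_{loc}$ with $J_g\in L^1_{loc}$; that $g\in W^{1,n}_{loc}$ is itself a nontrivial theorem of Iwaniec--Koskela--Onninen). The FKZ proof, and the paper's Lemmas \ref{ggl} and \ref{grhi}, therefore work at a \emph{subnatural} exponent $s<1$: the weak reverse H\"older inequality is of the form $\fint_{\frac{1}{2}B}|g|\le C(\fint_B (K|g|)^s)^{1/s}$, obtained by a Caccioppoli-type integration by parts rather than Hardy space theory. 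For the self-improvement, the distribution-function estimate you wrote, $|\{J_g>t\}\cap B|\lesssim t^{-1}\int_{\{K_g>c_0\log t\}\cap 2B}J_g\,dx$, does not close: integrating it against the weight needed for $J_g\log^a(e+J_g)$ (which is $\sim\log^a(e+t)$) and applying Fubini gives a term of order $\int J_g K_g^{a+1}$, and the Young splitting of $J_g K_g^{a+1}$ produces $J_g\log^{a+1}(e+J_g)$ on the right -- one logarithmic power \emph{higher} than the target, with no mechanism for absorption. The paper avoids this by a different calibration: in Lemma \ref{ggl} the good-$\lambda$ inequality from \cite[Prop.\,2.1]{Faraco2005} is multiplied by $\Phi(\lambda)=\frac{d}{d\lambda}\bigl(\frac{n-sn}{a}\log^a\lambda+\log^{a-1}\lambda\bigr)$, integrated, and the resulting $K$-weighted integral is split by Lemma \ref{lemmaineq} so that the target quantity $\int h_1\log^a(h_1^{1/n})$ reappears on the right with coefficient $C(n,s)/\beta$; choosing $a\sim\beta/(2C)$ then makes that coefficient small enough to absorb into the $\frac{1}{a}\int h_1\log^a(h_1^{1/n})$ on the left. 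This is where the linear dependence on $\beta$ actually comes from, and it is the piece missing from your sketch.
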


We are going to need the following important inequality. Its proof can be found in \cite[Appendix]{Faraco2005} or \cite[Lemma 6.2]{Hencl2014}.
\begin{lemma}\label{lemmaineq}
		Let $x,\ y\geq1$ then for any $a> -1$, $b>0$ we have
	\begin{equation}
		xy\log^{a}(C(n)(xy)^{1/n})\leq \frac{C(n)}{b}x\log^{a+1}(x^{1/n})+C(a,b,n)\exp(by),
	\end{equation}
	
\end{lemma}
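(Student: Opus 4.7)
The plan is a two-case split based on the size of $y$ relative to $\log x$, leveraging the Young-duality structure between $x\log^{a+1}(x^{1/n})$ and $e^{by}$. Fix a splitting parameter $\kappa>1$ (for concreteness I would take $\kappa=2$) and divide according to whether $y\leq \kappa\log x/b$ or $y>\kappa\log x/b$. The underlying heuristic is that the first regime is where the LHS is controlled by its "moment" $x\log^{a+1}(x^{1/n})$, and the second is where $x$ itself has already been forced to grow like an exponential in $y$ strictly smaller than $e^{by}$.

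In the small-$y$ case $y\leq \kappa\log x/b$ one has $xy\leq (\kappa/b)\,x\log x$. For $x$ above a threshold depending only on $n,b,\kappa$, the extra contribution $\log y$ to the inner logarithm is dominated, so $\log(C(n)(xy)^{1/n})\leq C'\log(x^{1/n})$; the sign of $a$ is handled uniformly by noting that $\log(C(n)(xy)^{1/n})\geq \log(x^{1/n})\geq 1$ in this range. Combining,
\[
xy\log^{a}(C(n)(xy)^{1/n})\leq \frac{C(n,a)}{b}\, x\log^{a+1}(x^{1/n}).
\]
The leftover low-$x$ region $1\leq x\leq X_{0}(n,b,\kappa)$ contributes at most $C(n,b)\,y\log^{a}(C(n)(X_{0}y)^{1/n})$, which is polynomial in $y$ and is harmlessly absorbed into $C(a,b,n)e^{by}$.

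In the large-$y$ case $y>\kappa\log x/b$ one has $x<e^{by/\kappa}$, so $xy\leq y\,e^{by/\kappa}$ and $\log(C(n)(xy)^{1/n})\leq C''(1+by)$ (using $y\geq 1$). Therefore
\[
xy\log^{a}(C(n)(xy)^{1/n})\leq C'''\, y^{a+1}\,e^{by/\kappa} \leq C(a,b,n)\,e^{by},
\]
where the last inequality uses that $\kappa>1$ leaves an exponential cushion $e^{by(1-1/\kappa)}$ which dominates the polynomial factor $y^{a+1}$.

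The main obstacle is the balance of constants. The condition $\kappa>1$ is forced by the polynomial-to-exponential absorption in Case B, and then Case A produces the multiplicative factor $\kappa/b$ appearing in the first RHS term. The genuinely delicate point is that the comparison $\log(C(n)(xy)^{1/n})\leq C'\log(x^{1/n})$ in Case A produces an $a$-dependent constant (through $(u+v)^{a}\leq 2^{|a|}(u^{a}+v^{a})$-type inequalities); to arrive at the stated form with $C(n)$ rather than $C(n,a)$ in front of $x\log^{a+1}(x^{1/n})$, one must reassign this residual $a$-dependence to the $C(a,b,n)e^{by}$ term via a secondary split, i.e.\ slightly enlarging $\kappa$ on the sub-region where the Case A constant would otherwise be unfavourable and discarding the excess into the exponential. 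Once this bookkeeping is done the inequality follows from the elementary two-case argument above.
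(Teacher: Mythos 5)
Your split on $y$ versus $\kappa\log x/b$ with $\kappa=2$ (equivalently on whether $x$ exceeds $e^{by/2}$) is precisely the argument the paper gives for the twin Lemma~\ref{lemma3}, and is the standard route for Lemma~\ref{lemmaineq} itself (which the paper defers to \cite{Faraco2005,Hencl2014}); both of your cases are handled correctly. The $a$-dependence you flag in the Case~A constant is a real wrinkle --- the paper's own computation in the proof of Lemma~\ref{lemma3} in fact also produces a constant $C_1(a,b,n)$ rather than the stated $C_1(n)/b$ --- and your proposed repair of raising the low-$x$ threshold $a$-dependently so that $\log\bigl(C(n)(xy)^{1/n}\bigr)\leq(1+\delta_a)\log(x^{1/n})$ with $(1+\delta_a)^a\leq 2$, while pushing the compact remainder into $C(a,b,n)e^{by}$, does close that gap (and for $-1<a<0$ the monotonicity $\log\bigl(C(n)(xy)^{1/n}\bigr)\geq\log(x^{1/n})$ avoids it entirely, as you note).
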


We also require a slight variant of the above inequality. We include a proof here for completeness.
\begin{lemma}\label{lemma3}
	Let $x,\ y\geq1$ then for any $a>-1$, $b>0$ we have
	\begin{equation}
		xy\log^{a}(e+(xy)^{1/n})\leq \frac{C_1(n)}{b}x\log^{a+1}(e+x^{1/n})+C_2(a,b,n)\exp(by),
		\end{equation}
	where  $C_1(n)$, $C_2(a,b,n)$ constants.
\end{lemma}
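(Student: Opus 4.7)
The plan is to reduce Lemma \ref{lemma3} to Lemma \ref{lemmaineq} by exploiting the fact that on the range $[1,\infty)$ the quantities $\log(e+s)$ and $\log(Cs)$ are comparable up to multiplicative constants. Since $x,y \geq 1$, both $(xy)^{1/n}$ and $x^{1/n}$ lie in $[1,\infty)$, which is the only range we need.

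First I would record the elementary two-sided comparability: for any fixed $C \geq 1$ and all $s \geq 1$,
\begin{equation*}
\log(Cs) \leq (1+\log C)\log(e+s) \qquad \text{and} \qquad \log(e+s) \leq (1+\log(e+1))\log(Cs),
\end{equation*}
using the lower bounds $\log(e+s) \geq 1$ and $\log(Cs) \geq \log s \geq 0$ on this range. Applying this with $s = (xy)^{1/n}$ and $C = C(n)$ from Lemma \ref{lemmaineq}, raising to the power $a > -1$, and splitting into the cases $a \geq 0$ and $-1 < a < 0$ to keep the inequality pointing in the right direction, I obtain
\begin{equation*}
\log^{a}(e+(xy)^{1/n}) \leq A(a,n)\,\log^{a}(C(n)(xy)^{1/n})
\end{equation*}
for some constant $A(a,n)$.

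Multiplying through by $xy$ and invoking Lemma \ref{lemmaineq} yields
\begin{equation*}
xy\log^{a}(e+(xy)^{1/n}) \leq \frac{A(a,n)\,C(n)}{b}\,x\log^{a+1}(x^{1/n}) + A(a,n)\,C(a,b,n)\exp(by).
\end{equation*}
To convert the first right-hand term into the form stated in the lemma I use that $x^{1/n} \leq e+x^{1/n}$, and since $a+1 > 0$ this gives $\log^{a+1}(x^{1/n}) \leq \log^{a+1}(e+x^{1/n})$. Collecting the constants as $C_1(n)$ and $C_2(a,b,n)$ completes the proof.

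The only delicate point is the case $-1 < a < 0$: exponentiation then reverses inequalities, so one genuinely needs the two-sided comparability between $\log(e+s)$ and $\log(Cs)$ rather than just one direction. This is a minor bookkeeping issue rather than a true obstacle—the lemma is in essence just a renormalization of the logarithm in Lemma \ref{lemmaineq}.
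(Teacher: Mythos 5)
Your strategy---reduce Lemma~\ref{lemma3} to Lemma~\ref{lemmaineq} by showing that $\log(e+s)$ and $\log(C(n)s)$ are two-sided comparable on $[1,\infty)$---is genuinely different from the paper's, which proves the inequality directly by splitting into the cases $x<e^{by/2}$ and $x\ge e^{by/2}$ without ever invoking Lemma~\ref{lemmaineq}. Your route is conceptually attractive, but as written it has a real gap in the direction that you actually need when $a\ge 0$.

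The problematic step is the claimed bound $\log(e+s)\leq(1+\log(e+1))\log(Cs)$ for all $s\ge 1$, justified only by ``$\log(Cs)\geq\log s\geq 0$.'' A lower bound of $0$ for $\log(Cs)$ cannot possibly yield an upper bound on the positive quantity $\log(e+s)$ in terms of $\log(Cs)$: at $s=1$ and $C=1$ the left side is $\log(e+1)>1$ while the right side is $0$, and for $C$ slightly above $1$ the ratio $\log(e+s)/\log(Cs)$ at $s=1$ is $\log(e+1)/\log C$, which blows up as $C\downarrow 1$. So the explicit constant $1+\log(e+1)$ is wrong, and no constant \emph{independent of $C$} can work. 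This is exactly the direction you need when $a\ge 0$, since then $t\mapsto t^a$ is increasing and you want $\log(e+s)\lesssim\log(Cs)$; for $-1<a<0$ the monotonicity reverses and your \emph{first} inequality $\log(Cs)\leq(1+\log C)\log(e+s)$, which is correct, is the one that is used, so that half of your argument is fine.

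The argument can be repaired, but only by bringing in information about $C(n)$ that you have not stated. Well-posedness of Lemma~\ref{lemmaineq} for $-1<a<0$ forces $C(n)>1$ (otherwise $\log^a(C(n)(xy)^{1/n})$ is undefined at $x=y=1$), and then for $s\ge 1$ one has $\log(Cs)\geq\log C>0$, giving $\log(e+s)\leq\bigl(1+\log(e+1)/\log C\bigr)\log(Cs)$. With this $C$-dependent constant the rest of your reduction goes through and produces $C_1$ depending on $n$ (through $C(n)$) and on $a$, which is consistent with what the paper's own computation yields. You should either make this dependence explicit and note the needed property $C(n)>1$, or adopt the paper's self-contained case split, which sidesteps the issue entirely.
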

\begin{proof}
 If $x< e^{by/2}$ then since there is a $C_2=C_2(a,b,n)$ such that \[y\log^{a}(e+(xy)^{1/n})\leq C_2 e^{by/2}\] we obtain that 
 \begin{equation}
 		xy\log^{a}(e+(xy)^{1/n})\leq C_2 e^{by}
 \end{equation}
If on the other hand,  $x\geq e^{by/2}$ then $y\leq \frac{2}{b}\log x$. Hence we have that
 \begin{align}
y\log^{a}(e+(xy)^{1/n}) &\leq  \frac{2}{b}\log x\log^{a}\left(e+\left(x\frac{2}{b}\log x\right)^{1/n}\right)\\ &\leq \frac{2}{b}\log (e+x)\log^{a}\left(e+\left(x^{2/b+1}\right)^{1/n}\right)\\&\leq \left(\frac{b+2}{b}\right)^{a}\frac{2n}{b}\log (e+x)^{1/n}\log^{a}\left(e+x^{1/n}  \right)\\&\leq C_1(a,b,n) \log^{a+1}(e+x^{1/n}),
\end{align}
where $C_1(a,b,n)=\left(\frac{b+2}{b}\right)^{a}\frac{2n}{b}$. Thus in that case we have 
\begin{equation}
	xy\log^{a}(e+(xy)^{1/n})\leq C_1(a,b,n) x\log^{a+1}(e+x^{1/n})
\end{equation}
 and we are done.
	\end{proof}

	\section{Higher integrability of finite distortion curves}\label{higherint}
	
	In this section we give the proofs of Theorems \ref{thm1/2} and \ref{thmhigh} which combined with Lemma \ref{lemma1} will give us Theorems \ref{thm1} and \ref{thm3/4}. 
\begin{proof}[Proof of Theorem \ref{thm1/2}]
	Let $K$ be a compact subset of $\Omega$ and $P(t)=t^n\log^a(e+t)$. We want to find the values of  $a$ for which  $\int_{K}P(\star f^*\omega)dx<\infty$.  To that end, write \begin{equation}
		\int_{K}P(\star f^*\omega)dx=I_1+I_2,
	\end{equation}
	where 
	\begin{equation}
		I_1=\int_{K\setminus (\cup_i V_i)}P(\star f^*\omega)dx \ \text{and} \ I_2=\int_{(\cup_i V_i)\cap K}P(\star f^*\omega)dx.
	\end{equation}
Here the sets $V_i$, $i=1,2,\dots$ are open subsets covering the set $E_f$ from condition (D) such that $\overline{V_i}\subset B_i$ where $B_i$ are the balls of condition (D).
	
	We now want to estimate the integrals $I_1$ and $I_2$. We start with $I_1$. Notice that if $\omega_y=\sum_I\phi_I(y)dx_I$, $y\in\R^m$ then $\star f^*\omega=\sum_I\phi(f(x)) \star f^*dx_I(x)$ with $x\in \R^n$. Moreover, since the $\omega$ form is bounded we have that $(||\omega||\circ f)(x)\geq |\omega|_{inf}$ and $\sum_I\phi(f(x))\leq |\omega|_{\ell_1}$. By condition (D) now we know that on the set $K\setminus (\cup_i V_i)$ the functions $\star f^*dx_I$, $I$ any multi-index, are uniformly bounded by a constant $M_2>0$ almost everywhere. Hence 
	\begin{equation}
		I_1\leq \int_{K\setminus (\cup_i V_i)}P\left(C \right)dx,
	\end{equation}
	where $C=C(\omega)>0$ constant. Now, the integral on the right hand side is finite since  $K$ is compact.
	
	To estimate $I_2$ now notice first that 
	\begin{equation}\label{eq22}
		I_2\leq\sum_i^\infty\int_{V_i\cap K}P(\star f^*\omega)dx.
	\end{equation}
	On each of the sets $V_i$ now there is a multi-index $I_i$ such that $\star f^*dx_{I_i}\geq \max_I\{\star f^*dx_I\}$. Since there are finitely many multi-indices, let us enumerate them as $I_j$ with $j=1,\dots, \binom{m}{n}$. We can now consider the finitely many sets $A_j=\left(\bigcup_{k}V_k\right)\cap K$, where $k$ runs over all the sets $V_i$ where $\star f^*dx_{I_j}\geq\max_I\{\star f^*dx_I\}$. We can now rewrite \eqref{eq22} as 
	\begin{equation}\label{eq24}
		I_2\leq\sum_j^{\binom{m}{n}}\int_{A_j}P(\star f^*\omega)dx.
	\end{equation}
	We fix a $j$ now and notice that $A_j\subset W_j:=(\bigcup_k B_k )\cap K_\varepsilon$, where $k$ runs over the balls $B_i$ for which $\star f^*dx_{I_j}\geq\max_I\{\star f^*dx_I\}$ and $K_\varepsilon$ is a $\varepsilon$-neighbourhood of $K$. On $W_j$ the distortion inequality \eqref{eq03} becomes 
	\begin{equation}\label{eq23}
		\left(||\omega||\circ f\right)(x)|Df(x)|\leq K_f(x) \star f^*dx_{I_j}\sum_I\phi_I(f(x)) 
	\end{equation}
	Notice now that if we set $f_{I_j}=(f_{j_1},\dots,f_{j_n})$, where $I_j=(j_1,\dots,j_n)$, then by \eqref{eq23} and using the facts that the $\omega$ form is bounded and $|Df_{I_j}(x)|\leq|Df(x)|$ we obtain 
	\begin{equation}
		|Df_{I_j}(x)|\leq \frac{|\omega|_{\ell_1}}{|\omega|_{\inf}} K_f(x) \star f^*dx_{I_j}, \ \text{for almost every}\ x\in W_j.
	\end{equation} 
	Notice that $|\omega|_{\ell_1}/|\omega|_{\inf}>1$. Hence that above equation implies that $f_{I_j}$ is a mapping of finite distortion on $W_j$ with distortion function $K_{f_{I_j}}=|\omega|_{\ell_1}/|\omega|_{\inf} K_f(x)$. By assumption now we know $\exp(\lambda K_f)\in L^1_{loc}(\Omega)$ or equivalently \[\exp(\lambda \frac{|\omega|_{inf}}{|\omega|_{\ell_1}}K_{f_{I_j}})\in L^1_{loc}(\Omega), \ \text{for some} \ \lambda>0.\]   Hence by Lemma \ref{lemma2} we obtain 
	\begin{equation}\label{eq301}
		\star f^*dx_{I_j}\log^{a}(e+\star f^*dx_{I_j})\in L^1_{loc}(W_j),
	\end{equation}
	for all $a<c_1\lambda \frac{|\omega|_{inf}}{|\omega|_{\ell_1}}$. Hence $I_2<\infty$ for those $a$, as we wanted.

	Next we want to show that $\int_{K}\hat{P}(|Df(x)|)dx<\infty$, where $\hat{P}(t)=t^n\log^{a-1}(e+t)$, for all $a<c_1\lambda \frac{|\omega|_{inf}}{|\omega|_{\ell_1}}$. By the distortion inequality we obtain that 
	\begin{equation}
		\int_{K}\hat{P}(|Df(x)|)dx\leq \int_{K}\hat{P}\left(\frac{|\omega|_{\ell_1}}{|\omega|_{\inf}}K_f(x)\star f^*\omega\right).
	\end{equation}
	By Lemma \ref{lemma3} we have that for all $b>0$ 
	\begin{equation}\label{eq25}
		\hat{P}\left(\frac{|\omega|_{\ell_1}}{|\omega|_{\inf}}K_f(x)\star f^*\omega\right)\leq \frac{C_1}{b}\star f^*\omega\log^{a}\left(e+\star f^*\omega^{1/n}\right)+C_2\exp\left(b\frac{|\omega|_{\ell_1}}{|\omega|_{\inf}}K_f(x)\right).
	\end{equation}
	By what we have proven so far, we have that the first term of the right hand side is integrable on $K$ when $a<c_1\lambda \frac{|\omega|_{inf}}{|\omega|_{\ell_1}}$.	 Moreover, for suitable $b>0$ we have that $C_2\exp\left(b\frac{|\omega|_{\ell_1}}{|\omega|_{\inf}}K_f(x)\right)\in L^1(K)$. Hence, \eqref{eq25} gives us that $\int_{K}\hat{P}(|Df(x)|)dx<\infty$, for $a<c_1\lambda \frac{|\omega|_{inf}}{|\omega|_{\ell_1}}$   as we wanted.
	\end{proof}
The classical way to prove higher integrability results about quasiregular maps in through the so called weak reverse H\"older inequalities and Gehring's lemma, see for example \cite{Bojarski1983} and  \cite{Iwaniec1998}. The same method was adapted to the setting of quasiregular curves by Pankka and Onninen in \cite{Onninen2021}. Similarly the proof of  higher integrability for finite distortion maps, in \cite{Faraco2005} was inspired by the same methods.

To prove Theorem \ref{thmhigh} we will adapt the arguments in \cite{Faraco2005} to our setting. To this end we need the  Hardy-Littlewood maximal function which is defined for any locally integrable mapping $g: \mathbb{R}^n \to \mathbb{R}$ by
\begin{equation*}
M(g)(x)= \sup_{r>0}\frac{1}{|B(x,r)|} \int_{B(x,r)} |g(y)|dy.
\end{equation*}

We start with a generalized version of Gehring's lemma which is implicitly contained in the proof of \cite[Theorem 1.1]{Faraco2005}.

\begin{lemma}[Generalized Gehring's lemma]\label{ggl}
	Let $g,K:\Omega\to\R$ be  functions such that $g\in L^1_{loc}(\Omega)$, $K(x)$ measurable and $1 \leq K(x)<\infty $ a.e. and $\exp(\beta K(x))\in L^1_{loc}$ for some $\beta>0$. Suppose that these functions satisfy the generalized weak reverse H\"older inequality
	\begin{equation}\label{eqg1}
		\fint_{\frac{1}{2}B}|g(x)|dx\leq C_1(n)\left(\fint_{B}(K(x)|g(x)|)^{s}dx\right)^{1/s},
	\end{equation}
	for some $s<1$ and all balls $B\subset\subset \Omega$. Then there is a $c_1=c_1(n)>0$ such that
	\begin{equation}\label{eqg2}
		\fint_{\frac{1}{2}B}|g(x)|\log^a\left(e+\frac{|g(x)|}{\fint_B |g(x)|dx}\right)dx\leq C_2(n,\beta, s)\fint_{B}\exp(\beta K(x))dx\fint_B |g(x)|dx,
	\end{equation}
	for all $a<c_1(n,s)\beta$.
\end{lemma}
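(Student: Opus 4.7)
My plan is to follow the classical pattern of Gehring-type lemmas adapted to the exponentially integrable distortion setting that underlies Lemma \ref{lemma2}: convert the weak reverse H\"older inequality \eqref{eqg1} into a distribution function estimate via a Calder\'on--Zygmund stopping-time argument, then combine it with the layer-cake formula and a Young-type exponential inequality to arrive at \eqref{eqg2}.

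First, I would exploit scale invariance. Both sides of \eqref{eqg1} and \eqref{eqg2} are homogeneous of degree one in $g$ (the logarithmic factor in \eqref{eqg2} depends only on the normalized ratio $|g|/\fint_B |g|$), so I may assume $\fint_B |g| dx = 1$ and reduce the target to
\[
\int_{B/2} |g| \log^a(e+|g|) dx \leq C \int_B \exp(\beta K) dx.
\]
For each level $t$ above a fixed threshold $t_0 > 1$ determined by the normalization, I would perform a Calder\'on--Zygmund stopping-time decomposition of $|g|$ on $B/2$ to extract sub-balls $\{B_j\} \subset B$ whose union covers $\{|g|>t\} \cap B/2$ up to a null set and which satisfy $\fint_{B_j}|g| \sim t$. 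Plugging each $B_j$ into \eqref{eqg1} and applying Chebyshev's inequality yields $|B_j \cap \{K|g| > \eta t\}| \geq c|B_j|$ for some $\eta = \eta(C_1,s) \in (0,1)$, and a Vitali selection then produces the good-$\lambda$ estimate
\[
|\{|g|>t\} \cap B/2| \leq C\, |\{K|g| > \eta t\} \cap B|, \qquad t \geq t_0.
\]
Multiplying by $\log^a(e+t)/t$ and integrating in $t$, i.e.\ applying the layer-cake formula with weight $\log^a(e+\cdot)$, I would obtain
\[
\int_{B/2} |g| \log^a(e+|g|) dx \leq C |B| + C \int_B K|g| \log^a(e+K|g|) dx.
\]

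The main obstacle, and the step that fixes the constant $c_1 = c_1(n,s)$, is dominating $K|g|\log^a(e+K|g|)$ by $\exp(\beta K)$ up to an absorbable remainder. I plan to apply a Young-type splitting in the spirit of Lemma \ref{lemma3}: on the set $\{K|g| \leq e^{\beta K/2}\}$ one has $\log(e+K|g|) \leq C\beta K$, so the product is dominated by $C\beta^a K^{a+1}|g|$, which is absorbed by $\exp(\beta K)$ precisely when $a+1$ is a sufficiently small multiple of $\beta$, producing the linear threshold $a < c_1 \beta$. On the complementary set $\{K|g| > e^{\beta K/2}\}$ one has $K \leq (2/\beta)\log(e+|g|)$, reducing the product to a term of the form $C|g|\log^{a+1}(e+|g|)$, which I would close by a bootstrap --- either iterating the whole estimate with a slightly smaller exponent, or running the argument first on truncations $\min(|g|, T)$ and letting $T\to\infty$, absorbing the emerging term by means of the small constant $1/\beta$. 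Balancing the two Young contributions is what will dictate the precise form of $c_1(n,s)$.
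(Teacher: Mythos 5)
Your broad architecture --- pass from the $L^s$-reverse-H\"older inequality to a distribution-function estimate, integrate it against a logarithmic weight via layer cake, and finish with an exponential Young-type inequality that absorbs the $K$-term into $\exp(\beta K)$ --- does match the paper's strategy in spirit. The paper realizes the same idea not by an explicit Calder\'on--Zygmund stopping time on $B$ but by introducing the distance-weighted function $h_1(x)=d(x)^n|g(x)|$ together with the cut-off $h_2=\chi_{B_0}$, proving a global weak reverse H\"older inequality for $h_1$ at every scale, and then invoking a maximal-function good-$\lambda$ result from \cite{Faraco2005} (this also quietly handles the boundary-of-$B$ issue that a raw CZ decomposition would have to address separately). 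So the frameworks are genuinely different in their technical realization but aim at the same distribution estimate.

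However, two quantitative steps in your outline do not go through as written. First, the good-$\lambda$ estimate you claim, namely $|\{|g|>t\}\cap\tfrac12 B|\le C\,|\{K|g|>\eta t\}\cap B|$, does not follow from \eqref{eqg1} by Chebyshev: on a stopping ball $B_j$ with $\fint_{B_j}|g|\sim t$, inequality \eqref{eqg1} gives $\fint_{B_j}(K|g|)^s\gtrsim t^s$, and splitting off the set $\{K|g|\le\eta t\}$ yields
\begin{equation}
t^s|B_j|\lesssim\int_{B_j\cap\{K|g|>\eta t\}}(K|g|)^s\,dx,
\end{equation}
which is a mass estimate, not a measure lower bound: $(K|g|)^s$ could be huge on a tiny subset. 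Summing over the Vitali-selected balls gives $|\{|g|>t\}\cap\tfrac12 B|\lesssim t^{-s}\int_{\{K|g|>\eta t\}\cap B}(K|g|)^s\,dx$, precisely the analogue of the paper's inequality \eqref{eqg5} (where the factor $\lambda^{n-sn}$ corresponds to $t^{1-s}$ after the substitution $t=\lambda^n$). Second, and more importantly, the layer-cake weight must be chosen as $\log^{a-1}(e+t)/t$ rather than $\log^a(e+t)/t$. With the correct distribution estimate and the weight $\log^{a-1}(e+t)/t$, Fubini produces $\tfrac1a\int|g|\log^a(e+|g|)$ on the left and $\int K|g|\log^{a-1}(e+K|g|)$ on the right --- the factor $t^{-s}$ in the distribution estimate together with the $(K|g|)^s$ integrand is exactly what gains one power of $\log$ when passing from right to left. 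Applying the Young inequality of Lemma~\ref{lemmaineq} to the right then produces $\tfrac{C}{\beta}\int|g|\log^a(e+|g|)+C\int e^{\beta K}$, and comparing the prefactors $1/a$ and $C/\beta$ gives precisely the linear threshold $a<c_1\beta$. With your weight you instead land on $\int K|g|\log^a(e+K|g|)$ on the right, so the Young step overshoots to $\log^{a+1}$, and the proposed bootstrap (iterating with a slightly smaller exponent, or truncating) does not obviously converge: each iteration leaves the power of $\log$ strictly too high, and the constants deteriorate. The truncation at a level $j$ is indeed used in the paper, but only to make both sides finite so that the absorption is legitimate; it does not repair the mismatch in log powers.
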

\begin{proof}
	Fix a ball $B_0(x_0,r_0)\subset\subset \Omega$. Since both \eqref{eqg1} and \eqref{eqg2} are homogeneous with respect to $g$ we can assume without loss of generality that $\int_{B_0}|g(x)|dx=1$. Let now $d(x)=\dist(x,\R^n\setminus B_0)$ and define the functions $h_1,h_2:\R^n\to\R$ by \begin{equation}
		h_1(x)=d(x)^n |g(x)| \ \text{and} \ h_2(x)=\chi_{B_0}(x),
	\end{equation}
	for all $x\in \Omega$ and $0$ outside of $\Omega$. We are going to prove that for all balls $B\subset \R^n$ either 
	\begin{equation}\label{eqg3}
		\left(\fint_B h_1(x)dx\right)^{1/n}\leq C_1(n)\left(\fint_{2B}(K(x)h_1(x))^s\right)^{1/sn},
	\end{equation}
	in case $3B\subset B_0$ or
	\begin{equation}\label{eqg4}
		\left(\fint_B h_1(x)dx\right)^{1/n}\leq C_3(n) \left(\fint_{2B} h_2(x)dx\right)^{1/n},
	\end{equation}
	when $3B$ is not contained in $B_0$. We can assume that $B\cap B_0\not=\emptyset$ otherwise the left-hand side is zero and thus the inequalities hold trivially. So suppose that $3B\subset B_0$ then a simple geometric argument shows that \[\max_B d(x)\leq 4\min_{2B} d(x).\] Hence by \eqref{eqg1} we have,
	\begin{equation}
		\begin{aligned}
			\left(\fint_B h_1(x)dx\right)^{1/n}&\leq \max_B d(x)\left(\fint_B |g(x)|dx\right)^{1/n}\\&\leq C_1(n)\min_{2B}d(x)\left(\fint_{2B}(K(x)|g(x)|)^{s}dx\right)^{1/sn}\\ &\leq C_1(n)\left(\fint_{2B}(K(x)h_1(x))^{s}dx\right)^{1/sn}.
		\end{aligned}
	\end{equation}
	
	On the other hand, suppose that $3B$ is not contained in $B_0$. Since $B_0$ intersects $B$, again a simple geometric argument shows that \[\max_B d(x)\leq \max_{2B}d(x)\leq C_4(n) |2B\cap B_0|^{1/n}.\]
	Hence, using the fact that  $\int_{B_0} |g(x)|dx=1$ we  have that\begin{equation}
		\begin{aligned}
			\left(\frac{1}{|B|}\int_B h_1(x)dx\right)^{1/n}&\leq\max_B d(x)\left(\frac{1}{|B|}\int_{B\cap B_0} |g(x)|dx\right)^{1/n}\\&\leq C_4(n)\left(\frac{|2B\cap B_0|}{|B|}\int_{B_0} |g(x)|dx\right)^{1/n}\\&\leq C_3(n)\left(\fint_{2B} h_2(x)dx\right)^{1/n}.
		\end{aligned}
	\end{equation}
	Notice now that due to the definition of the functions $h_1$ and $h_2$ we have that the $\sup$ in the definition of the maximal functions $M(h_1)$, $M(Kh_1)$ and $M(h_2)$ is actually a $\max$. Hence, by \eqref{eqg3}, we have that when the $\sup$ is achieved for a ball $B$ with $3B\subset B_0$ then
	\begin{equation}
		M(h_1)(y)^{1/n}\leq C_1(n)M((Kh_1)^s)(y)^{1/sn}
	\end{equation}
	otherwise by \eqref{eqg4}, we have that 
	\begin{equation}
		M(h_1)(y)^{1/n}\leq C_4(n)M(h_2)^{1/n}.
	\end{equation}
	
	Notice now that $M(h_2)(y)\leq 1$ and thus there is a $\lambda_1(n)$ such that the set $\{y\in\R^n\colon C_4(n)M(h_2)(y)\geq \lambda^n\}$ is empty for every $\lambda>\lambda_1$. Hence, when $\lambda$ large we have
	\begin{equation}
		\{y\in\R^n\colon M(h_1)(y)\geq \lambda^n\}\subset \{y\in\R^n\colon C_1(n)M((Kh_1)^s)(y)\geq \lambda^{sn}\}
	\end{equation} 
	which implies that 
	\begin{equation}
		|\{y\in\R^n\colon M(h_1)(y)\geq \lambda^n\}|\leq |\{y\in\R^n\colon C_1(n)M((Kh_1)^s)(y)\geq \lambda^{sn}\}|.
	\end{equation}
	We want now to apply \cite[Proposition 2.1]{Faraco2005} twice and conclude that 
	\begin{equation}\label{eqg5}
		\int_{\{h_1>\lambda^n\}}h_1(x)dx\leq C_5(n)\lambda^{n-sn}\int_{\{C_6(n,s)Kh_1>\lambda^n\}}(K(x)h_1(x))^sdx.
	\end{equation}
	
	To do that we need to show that $h_1$ and $(Kh_1)^s$ are in $L^1(\R^n)$. This is easy to see for $h_1$ since $g\in L^1_{loc}$. For $(Kh_1)^s$ we can argue using the inequality $	ab\leq \exp(\kappa a)+\frac{2b}{\kappa}\log(e+b/\kappa)$ and the fact that $K(x)$ is exponentially integrable and $h_1$ is in $L^1$. We refer to the  proof of Theorem \ref{thm0} for a more detailed argument. 
	
	Let now $a>0$ be a constant and set 
	\begin{equation}
		\Psi(\lambda)=\frac{n-sn}{a}\log^a(\lambda)+\log^{a-1}\lambda.
	\end{equation}
	Notice that
	\begin{equation}
		\Phi(\lambda):=\frac{d}{d\lambda}\Psi(\lambda)=\frac{n-sn}{\lambda}\log^{a-1}(\lambda)+\frac{a-1}{\lambda}\log^{a-2}\lambda
	\end{equation}
	and $\Phi(\lambda)>0$ for all $\lambda>\lambda_2(n,s)=e^{1/(n-sn)}$. Moreover, 
	\begin{equation}
		\lambda^{n-sn} \Phi(\lambda)=\frac{d}{d\lambda}(\lambda^{n-sn}\log^{a-1}\lambda).	
	\end{equation}
	We multiply now both sides of \eqref{eqg5} by $\Phi(\lambda)$ and we integrate with respect to $\lambda$ over the interval $(\lambda_0,j)$, where $\lambda_0=\max\{\lambda_1,\lambda_2\}$ and $j>\lambda_0$ large. Notice that the functions $\Phi(\lambda)h_1(x)$ and $(K(x)h_1(x))^s\lambda^{s-sn}\Phi(\lambda)$ are non-negative on the sets we are integrating over. Hence, by the Fubini-Tonelli theorem we can change the order of integration to obtain 
	\begin{equation}
		\begin{aligned}
		\int_{A_1}h_1(x)\int_{\lambda_0}^{h_1^{1/n}}&\Phi(\lambda)d\lambda dx+\int_{A_1'}h_1(x)\int_{\lambda_0}^{j}\Phi(\lambda)d\lambda dx \\ &\leq C_5(n)\int_{A_2}(K(x)h_1(x))^s\int_{\lambda_0}^{(C_6Kh_1)^{1/n}}\lambda^{n-sn}\Phi(\lambda)d\lambda dx\\&+C_5(n)\int_{A_2'}(K(x)h_1(x))^s\int_{\lambda_0}^{j}\lambda^{n-sn}\Phi(\lambda)d\lambda dx,
	\end{aligned}
	\end{equation}
	where $A_1=\{\lambda_0<h_1^{1/n}<j\}$, $A_1'=\{h_1^{1/n}\geq j\}$ and $A_2=\{\lambda_0<(C_6Kh_1)^{1/n}<j\}$, $A_2'=\{(C_6Kh_1)^{1/n}\geq j\}$.

 Notice now that 
	\begin{equation}
		\begin{aligned}\int_{A_2}(K(x)h_1(x))^s\int_{\lambda_0}^{j}\lambda^{n-sn}\Phi(\lambda)d\lambda dx\leq \int_{A_2}K(x)h_1(x)\log^{a-1}(C_6K(x)h_1(x))^{1/n} dx,
		\end{aligned}
	\end{equation} and thus we obtain 
	\begin{equation}
		\begin{aligned}
		\int_{A_1}&h_1(x)(\Psi(h_1^{1/n})-\Psi(\lambda_0)) dx+\int_{A_1'}h_1(x)(\Psi(j)-\Psi(\lambda_0)) dx\\&\leq C_5(n)\left(\int_{A_2}C_6(n)K(x)h_1(x)\log^{a-1}(C_6K(x)h_1(x))^{1/n} dx +\int_{A_2'}(K(x)h_1(x))^sj^{n-sn}\log^{a-1}j dx\right),
		\end{aligned}
	\end{equation}
	which implies
	\begin{equation}\label{eqg6}
		\begin{aligned}
			\frac{n-sn}{a}&\left(\int_{A_1}h_1(x)\log^a(h_1^{1/n}) dx+\int_{A_1'}h_1(x)\log^a j dx\right)\\&\leq C_7(n)I_1 +C_5(n)I_2+C_8(n,s,a)\left(\int_{A_1}h_1(x)dx+\int_{A_1'}h_1(x)dx\right),
		\end{aligned}
	\end{equation}
where $$I_1=\int_{A_2}K(x)h_1(x)\log^{a-1}(C_6K(x)h_1(x))^{1/n}$$ and $$I_2=\int_{A_2'}(K(x)h_1(x))^sj^{n-sn}\log^{a-1}j dx.$$
	Notice now that since $\int_{B_0}|g(x)|dx=1$ and $d(x)\leq r_0$ we have that \[\int_{A_1}h_1(x)dx+\int_{A_1'}h_1(x)dx\leq 2\int_{B_0}d(x)^n |g(x)|dx\leq C_9(n)|B_0|.\] Hence, \eqref{eqg6} becomes
	\begin{equation}\label{eqg10}
		\begin{aligned}
			\frac{1}{a}&\left(\int_{A_1}h_1(x)\log^a(h_1^{1/n}) dx+\int_{A_1'}h_1(x)\log^a j dx\right)\\&\leq C_{10}(n,s)I_1+C_{11}(n,s)I_2+C_{12}(n,s,a)|B_0|.
		\end{aligned}
	\end{equation}

	Notice now that $\{h_1>\lambda_0^n\}\subset \{C_6(n)K(x)h_1(x)>\lambda_0^n\}$ since its not hard to see that $C_6(n)>1$. Hence we can write
\begin{equation}
	I_1= \int_{E_1}K(x)h_1(x)\log^{a}(C_6K(x)h_1(x))^{1/n} dx+\int_{E_2}K(x)h_1(x)\log^{a}(C_6K(x)h_1(x))^{1/n} dx,
\end{equation}
where $E_1=A_2\cap \{h_1^{1/n}\leq \lambda_0\}$ and $E_2=\{h_1^{1/n}>\lambda_0\}\cap\{(C_6Kh_1)^{1/n}<j\}\subset A_1$.  The first integral, on the right hand side, is bounded by $C_{13}(n,s)\int_{B_0}e^{\beta K(x)}$. We will call the second integral $P_0$. 

Moreover, we can do the same for the integral $I_2$ and write is as a sum of integrals over the sets $A_1'$, $A_2'\setminus A_1'\cap \{h_1^{1/n}\leq\lambda_0\}$ and $A_2'\setminus A_1'\cap \{h_1^{1/n}>\lambda_0\}\subset A_1$ which we call $P_1$, $P_2$ and $P_3$ respectively. Again the integral over the set $A_2'\setminus A_1'\cap \{h_1^{1/n}\leq\lambda_0\}$ is bounded by $C_{14}(n,s)\int_{B_0}e^{\beta K(x)}$. Furthermore, notice that \[P_3\leq \int_{A_1}C_6^{n-sn}K(x)h_1(x)\log^{a-1}(C_6K(x)h_1(x))^{1/n}=:P_4.\]
Hence \eqref{eqg10} gives us
\begin{equation}\label{eqg11}
	\begin{aligned}
			\frac{1}{a}&\left(\int_{A_1}h_1(x)\log^a(h_1^{1/n}) dx+\int_{A_1'}h_1(x)\log^a j dx\right)\\&\leq C_{10}(n,s)P_0+C_{11}(n,s)(P_1+P_4)+C_{15}(n,s)\int_{B_0}e^{\beta K(x)}+C_{12}(n,s,a)|B_0|.
	\end{aligned}
\end{equation}
	We apply now Lemma \ref{lemmaineq}  and  we obtain
	\begin{equation}
		P_0\leq C(n,a)\int_{A_1}e^{\beta K(x)}dx+\frac{C_6^{1/n}}{\beta}\int_{A_1}h_1(x)\log^{a}(h_1(x))^{1/n} dx
	\end{equation}
 and
 	\begin{equation}
 	P_4\leq C'(n,a)\int_{A_1}e^{\beta K(x)}dx+\frac{C_6^{n-sn+1/n}}{\beta}\int_{A_1}h_1(x)\log^{a}(h_1(x))^{1/n} dx
 \end{equation}
and
\begin{equation}
	P_1\leq C''(n,a)\int_{A_1'}e^{\delta K(x)^s}dx+\frac{1}{\delta}\int_{A_1'}h_1^s(x)j^{n-sn}\log^{a-1}j\log\left(h_1^{s/n}j^{1-s}\log^{(a-1)/s}j\right) dx,
\end{equation}
for some $\delta>0$ which will be fixed later. We will call the second integral on the right hand side $P_5$. Using the the above estimates and the fact that $A_1,\ A_1'\subset B_0$ and $e^{\delta K(x)^s}<ce^{\beta K(x)}$ for some constant $c=c(\delta)>0$,  \eqref{eqg11} gives 
	\begin{equation}\label{eqg7}
		\begin{aligned}
		\frac{1}{a}&\left(\int_{A_1}h_1(x)\log^a(h_1^{1/n}) dx+\int_{A_1'}h_1(x)\log^a j dx\right)\\&\leq \frac{C_{16}(n,s)}{\beta}\int_{A_1}h_1(x)\log^{a}(h_1(x))^{1/n} dx+\frac{C_{11}(n,s)}{\delta}P_5 +C_{17}(n,s,a,\beta)\int_{B_0}e^{\beta K(x)}dx\\  &+C_{12}(n,s,a)|B_0|,
		\end{aligned}
	\end{equation}
Put now $a=\frac{\beta}{2C_{16}(n,s)}$. It is not hard to see  that for all $j$ and for large enough $\delta>0$ we have that \[\frac{C_{11}}{\delta}P_5\leq \frac{1}{a}\int_{A_1'}h_1(x)\log^a j dx.\] 
Hence, \eqref{eqg7} implies 
 
\begin{equation}
	\begin{aligned}
		\int_{A_1}h_1(x)\log^a(h_1^{1/n}) dx\leq  C_{17}(n,s,a,\beta)\int_{B_0}e^{\beta K(x)}dx +C_{12}(n,s,a)|B_0|.
	\end{aligned}
\end{equation}
It is easy to see that $|B_0|<\int_{B_0}e^{\beta K(x)}dx$. Hence, if we take limits as $j\to\infty$ then by the monotone convergence theorem we obtain
 \begin{equation}\label{eqg8}
	\int_{\{h_1>\lambda_0^n\}}h_1(x)\log^a(h_1^{1/n}) dx\leq C_{18}(n,s,\beta)\int_{B_0}e^{\beta K(x)}dx.
\end{equation}

We are now ready to prove \eqref{eqg2}. First notice that since $\lambda_0^n>1$ there is a constant $k>0$ such that $x\log^a(e+x)\leq kx\log^a(x)$, for all $x>\lambda_0^n$. Now notice that
\begin{equation}
	\int_{B_0}h_1(x)\log^a(e+h_1) dx=\int_{\{h_1>\lambda_0^n\}}h_1(x)\log^a(e+h_1) dx+ \int_{\{h_1\leq\lambda_0^n\}\cap B_0}h_1(x)\log^a(e+h_1) dx.
\end{equation}
The second integral on the right-hand side is bounded by $C_{19}(n,s)|B_0|$. Thus 
\begin{equation}
	\int_{B_0}h_1(x)\log^a(e+h_1) dx\leq	k\int_{\{h_1>\lambda_0^n\}}h_1(x)\log^a(h_1) dx+ C_{19}(n,s)|B_0|.
\end{equation}
Using \eqref{eqg8} we obtain 
\begin{equation}
	\int_{B_0}h_1(x)\log^a(e+h_1) dx\leq C_{20}(n,s,\beta)\int_{B_0}e^{\beta K(x)}dx.
\end{equation}
Notice now that in the ball $\frac{1}{2}B_0$ we have that $d(x)^n\geq \frac{1}{2^n}r_0^n\geq C_{23}(n)|B_0|$. Hence the above inequality implies
\begin{equation}
	C_{21}(n)|B_0|\int_{\frac{1}{2}B_0}|g(x)|\log^a(e+C_{21}|B_0| |g(x)|) dx	\leq C_{22}(n,s,\beta)\int_{B_0}e^{\beta K(x)}dx.
\end{equation}
Using now the normalization $\int_{B_0}|g(x)|dx=1$ and the inequality $c\log^a(e+x)<\log^a(e+cx),$ $c<1$ in the case that $C_{21}(n)<1$ we obtain 
\begin{equation}
	\fint_{\frac{1}{2}B_0}|g(x)|\log^a\left(e+\frac{|g(x)|}{\fint_{B_0} |g(x)|dx}\right)dx\leq C_2(n,\beta, s)\fint_{B_0}\exp(\beta K(x))dx\fint_{B_0} |g(x)|dx,
\end{equation}
as we wanted.
\end{proof}
Next we have the analogues of the  weak reverse H\"older inequalities for finite distortion curves.

\begin{lemma}[Generalized weak reverse H\"older inequality]\label{grhi}
	Let $f\in W^{1,n}_{loc}(\Omega,\R^m)$ be a finite distortion $\omega$-curve, where $\omega$ is an $n$-volume form with constant coefficients and let $B\subset\subset \Omega$. Then 
	\begin{equation}
		\fint_{\frac{1}{2}B}\star f^*\omega dx\leq C(n,m)\left(\fint_{B}(K_f(x)\star f^*\omega)^{n/(n+1)}dx\right)^{(n+1)/n}.
	\end{equation}
\end{lemma}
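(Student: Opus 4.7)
The plan is to reduce the inequality to sub-Jacobians of $f$ and then close the estimate with the classical divergence-free cofactor trick combined with Hölder's and Sobolev--Poincaré inequalities, chosen with exponents that land exactly on the power $n/(n+1)$. Writing $\omega=\sum_{I\in H_\omega} c_I\,dx_I$ with $c_I\in\R$, we have the pointwise identity $\star f^*\omega=\sum_I c_I J_{f_I}$ where $f_I=(f_{i_1},\dots,f_{i_n}):\Omega\to\R^n$ and $J_{f_I}=\det(Df_I)$; each $f_I$ inherits the $W^{1,n}_{loc}$ regularity of $f$. The distortion inequality \eqref{eq03} forces $\star f^*\omega\ge 0$ almost everywhere, so the sum is nonnegative even though individual $c_I J_{f_I}$ need not be.

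Let $\phi\in C^\infty_0(B)$ satisfy $0\le\phi\le 1$, $\phi\equiv 1$ on $\tfrac12 B$, and $|\nabla\phi|\le C/r$. By nonnegativity and the decomposition above,
\[
\int_{\frac12 B}\star f^*\omega \le \int_B \phi^n \star f^*\omega = \sum_I c_I \int_B \phi^n J_{f_I}.
\]
For each $I$ I would use that the cofactor row $\operatorname{cof}_1(Df_I)$ is distributionally divergence-free --- a classical fact which requires exactly $f_I\in W^{1,n}_{loc}$ --- and that the same holds for any constant multiple of it; this yields the integration by parts identity
\[
\int_B \phi^n J_{f_I}\,dx = -n\sum_{j=1}^n\int_B \bigl(f_{i_1}-(f_{i_1})_B\bigr)\phi^{n-1}(\partial_j\phi)\,\operatorname{cof}_{1j}(Df_I)\,dx.
\]

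Bounding $|\operatorname{cof}_{1j}(Df_I)|\le|Df|^{n-1}$ and using the constant-coefficient form of the distortion inequality, which reads $|Df|^n\le C(\omega)K_f\star f^*\omega$ since $\|\omega\|$ is a positive constant, gives $|Df|^{n-1}\le C(K_f\star f^*\omega)^{(n-1)/n}$. Hölder's inequality with exponents $p_1=n^2$ and $p_2=n^2/(n^2-1)$ --- forced by the requirement $(n-1)p_2/n=n/(n+1)$ --- then yields
\[
\Bigl|\int_B \phi^n J_{f_I}\Bigr|\le \frac{C}{r}\,\|f_{i_1}-(f_{i_1})_B\|_{L^{n^2}(B)}\Bigl(\int_B(K_f\star f^*\omega)^{n/(n+1)}\,dx\Bigr)^{(n^2-1)/n^2}.
\]
The Sobolev--Poincaré inequality with Sobolev exponent $p=n^2/(n+1)<n$ and conjugate $p^*=n^2$, combined with the pointwise bound $|Df|^{n^2/(n+1)}\le C(K_f\star f^*\omega)^{n/(n+1)}$, controls the first factor by $C\bigl(\int_B(K_f\star f^*\omega)^{n/(n+1)}\bigr)^{(n+1)/n^2}$. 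Summing over the finitely many $I\in H_\omega$ (with $\sum_I|c_I|$ controlled by $\omega$), and normalising via $|B|\sim r^n$ so that the factor $r^{-1}\cdot|B|^{(n+1)/n}\cdot|B|^{-1}$ telescopes to a pure constant, produces the claimed averaged inequality.

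The main obstacle I anticipate is the integration by parts step: the divergence-free cofactor identity and the ability to subtract the mean $(f_{i_1})_B$ both hinge on $f_I\in W^{1,n}_{loc}$, which is precisely why this Sobolev regularity is built into the lemma's hypothesis; without it the whole machinery collapses. Beyond that, the argument is a rigid algebraic exercise: the triple $(p_1,p_2,p)=(n^2,\,n^2/(n^2-1),\,n^2/(n+1))$ is uniquely pinned down by the target exponent $n/(n+1)$ and the Sobolev compatibility $p^*=n^2$, so there is essentially no flexibility in the choices.
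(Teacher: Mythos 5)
Your proof is correct and follows essentially the same route as the paper's: a cutoff function plus integration by parts yields a Caccioppoli-type estimate, and then Hölder with exponents $n^2$ and $n^2/(n^2-1)$ together with Sobolev--Poincaré at $p=n^2/(n+1)$, $p^*=n^2$, followed by the distortion inequality, give exactly the claimed reverse Hölder inequality. The only cosmetic difference is that the paper invokes the Caccioppoli inequality for $\omega$-curves from Onninen--Pankka (their Lemma 6.1) as a black box applied to the full pullback $\star f^*\omega$, whereas you rederive it summand-by-summand via the Piola divergence-free cofactor identity applied to each sub-Jacobian $J_{f_I}$ (and sum with $|c_I|$), which is precisely the mechanism behind that lemma when $\omega$ has constant coefficients.
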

\begin{proof}
	By following the argument in the proof of  \cite[Lemma 6.1]{Onninen2021}, where the authors prove a Cacciopoli type inequality for quasiregular $\omega$-curves, we can show that for all balls $B(y,r)\subset\subset\Omega$ and all functions $\phi\in C^\infty_{0}(B)$ we have
	\begin{equation}
		\int_B \phi(x) (\star f^*\omega)(x)dx\leq ||\omega||\int_B |\nabla\phi(x)||f(x)-f_B| |Df(x)|^{n-1}dx,
	\end{equation}
	where $f_B=\fint_B f(x)dx$ (integration is meant coordinate-wise). It is important to note that this argument uses the fact that $f\in W^{1,n}_{loc}(\Omega,\R^m)$. We choose now $\phi$ so that $\phi(x)=1$ when $x\in \frac{1}{2}B$, $0\leq \phi\leq1$ and $|\nabla\phi|\leq 2/r$. With this choice for $\phi$ and by applying H\"older's inequality we obtain 
	\begin{equation}\label{eqr1}
		\int_{\frac{1}{2}B}  (\star f^*\omega)(x)dx\leq \frac{2||\omega||}{r}\left(\int_B|f(x)-f_B|^{n^2}dx\right)^{1/n^2}\left(\int_B|Df(x)|^{n^2/(n+1)}\right)^{n^2-1/n^2}.
	\end{equation} 
	By applying now the Poincare-Sobolev inequality, see for example \cite[Theorem A.18]{Hencl2014},  coordinate-wise it is easy to see that \begin{equation}
		\left(\int_B|f(x)-f_B|^{n^2}dx\right)^{1/n^2}\leq C_0(n,m)\left(\int_B|Df(x)|^{n^2/(n+1)}\right)^{(n+1)/n^2}.
	\end{equation}
	Hence, \eqref{eqr1} becomes
	\begin{equation}
		\fint_{\frac{1}{2}B}  (\star f^*\omega)(x)dx\leq ||\omega||C(n,m) \left(\fint_B|Df(x)|^{n^2/(n+1)}\right)^{(n+1)/n}.
	\end{equation}
	Applying the distortion inequality \eqref{eq03} now in the above inequality gives
	\begin{equation}
		\fint_{\frac{1}{2}B}  (\star f^*\omega)(x)dx\leq C(n,m)\left(\fint_B(K_f(x)\star f^*\omega)^{n/(n+1)}\right)^{(n+1)/n},
	\end{equation}
	which is what we wanted.
\end{proof}

The proof of Theorem \ref{thmhigh} now readily follows.
\begin{proof}[Proof of Theorem \ref{thmhigh}]
	The fact that \begin{equation}\label{eqproof}
		\star f^*\omega\log^a(e+\star f^*\omega)\in L^1_{loc}(\Omega)
	\end{equation} follows immediately from Lemmas \ref{ggl} and \ref{grhi}. On the other hand, \[|Df(x)|^n\log^{a-1}(e+|Df(x)|)\in L^1_{loc}(\Omega)\] follows
	by using \eqref{eqproof} and the inequality in Lemma \ref{lemmaineq}. 
\end{proof}

\section{Proofs of Theorems \ref{thm1}, \ref{thm3/4} and \ref{thmmodulus}}\label{proofs}
	\begin{proof}[Proof of Theorem \ref{thm1}]
		Let $U_k=\{x\in\Omega\colon \dist(x,\partial \Omega)>1/k \}\cap B(0,k)$, where $k=1,2,3,\dots$ and $B(0,k)$ open ball centred at $0$ and of radius $k$. If we take $c=c(n,\omega)=\frac{n|\omega|_{\ell_1}}{c_1|\omega|_{inf}}$ then we can find a $p$ so that $n-1<p$ and $p+1<\frac{\lambda|\omega|_{inf}c_1}{|\omega|_{\ell_1}}$ since $\lambda>c$.  By Theorem \ref{thm1/2} now, we have that  $f$ is in $W^{1,P}(U_k)$ for $P(t)=t^n\log^p(e+t)$ and some $p>n-1$. Hence, Lemma \ref{lemma1}  implies that  $f$ has a continuous and almost everywhere differentiable representative in $U_k$, which also has Lusin's (N) property, for each large enough $k$. Since $U_k\subset U_{k+1}$ and $\bigcup_{k=1}^\infty U_k=\Omega$ this immediately implies that $f$ has a  representative in $\Omega$ with all the above properties. 
	\end{proof}
\begin{proof}[Proof of Theorem \ref{thm3/4}]
	The proof follows in the same way as that of Theorem \ref{thm1} only instead of Theorem \ref{thm1/2} we use \ref{thmhigh}.
	\end{proof}
	
\begin{proof}[Proof of Theorem \ref{thmmodulus}]
	First we assume that $F$ is a ball compactly contained in $\Omega$. We want to apply \cite[Theorem 3.6]{Donaldson1971} to the coordinate functions of $f$ in $F$. To do that we need to know that  they are in $W^{1,P}_{loc}(\Omega)$, with $P(t)$ such that \[\int_{1}^\infty \frac{P^{-1}(t)}{t^{(n+1)/n}}dt<\infty.\]  It is not hard to see now that this last condition is satisfied when $P(t)=t^n\log^a(e+t)$, with $a>n$. By Theorems \ref{thm1/2} and \ref{thmhigh} now there is a constant $q$ such that if $\exp(\lambda K_f(x))\in L^1_{loc}(\Omega)$ for some $\lambda>q$ then we have that $f\in W^{1,P}_{loc}(\Omega)$. Thus by applying \cite[Theorem 3.6]{Donaldson1971} to the coordinate functions it is easy to see that  \eqref{eqmodulus} holds and we are done. For a general compact set $F$ now the result follows routinely. We sketch the proof for completeness. Cover the segment connecting $x$ and $y$  by a finite number of balls $B_i\subset \Omega$, $i=1,\dots, k$ (if the segment is not in $\Omega$ just consider a polygonal path) and find points $x_i$, $i=1,\dots, k$ such that $x_0=x$, $x_k=y$ and $x_i\in B_i\cap B_{i+1}$, $i=1,\dots,k-1$. By applying now the special case of the theorem for balls in  $B_i$ we can conclude that \begin{equation}
		|f(x_i)-f(x_{i+1})|\leq Q_i ||f||_{W^{1,P}}\int_{|x_i-x_{i+1}|^{-n}}^\infty \frac{P^{-1}(t)}{t^{(n+1)/n}},
	\end{equation} 
for $i=1,\dots, k$.	Triangle inequality and the fact that $|x_i-x_{i+1}|^{-n}>|x-y|^{-n}$ now, give us the required inequality. 
	\end{proof}

		\section{Monotonicity implies continuity}\label{monotonicity}
		Here we prove Theorem \ref{thm0}. The proof is essentially an adaptation of  \cite[Lemma 2.8]{Hencl2014}.
\begin{proof}[Proof of Theorem \ref{thm0}]
	From Lemma \ref{lemma0} it is enough to show that the coordinate functions are in $W^{1,P}(\Omega)$, where $P(t)=t^n/\log(e+t)$. For that it is enough to show that $|Df|\in L^n \log^{-1}L(\Omega)$.
	
	First notice that because the function $P(t),\ t>0$ is increasing and because $\omega$ is a bounded form we have that 
	\begin{align}
		\frac{|Df|^n}{\log(e+|Df|)}&\leq \frac{c_0K_f(x)\star f^*\omega}{\log(e+(c_0K_f(x)\star f^*\omega)^{1/n})}\\ &\leq \frac{c_0K_f(x)\star f^*\omega}{\log(e+(c_0\star f^*\omega)^{1/n})}\\ &\leq n\frac{c_0K_f(x)\star f^*\omega}{\log(e+c_0\star f^*\omega)},
	\end{align}
	where $c_0>0$ is a constant that depends on $\omega$. Hence for any compact set $K\subset \Omega$ we have that 
	\begin{equation}\label{eq001}
		\int_K \frac{|Df|^n}{\log(e+|Df|)}dx\leq n\int_K \frac{c_0K_f(x)\star f^*\omega}{\log(e+c_0\star f^*\omega)}dx
	\end{equation}
	By using now \eqref{eq001} and the inequality (see \cite[Lemma 2.7]{Hencl2014} for a proof)
	\begin{equation}
		ab\leq \exp(\kappa a)+\frac{2b}{\kappa}\log(e+b/\kappa), \ \text{for all} \ a\geq 1, b\geq 0, \kappa>0
	\end{equation}
	for $a=K_f(x)$, $b=\frac{c_0\star f^*\omega}{\log(e+c_0\star f^*\omega)}$ and $\kappa=\lambda$ we obtain
	\begin{align}
		\int_K \frac{|Df|^n}{\log(e+|Df|)}dx\leq n&\int_K \exp(\lambda K_f(x))dx\\ + &\frac{2n}{\lambda}\int_K  \frac{c_0\star f^*\omega}{\log(e+c_0\star f^*\omega)}\log\left( e+\frac{c_0\star f^*\omega}{\lambda\log(e+c_0\star f^*\omega)}\right)dx
	\end{align} 
	The first integral is now finite by assumption. We can split the second integral into two integrals, over the sets \[A_1=\{x\in K\colon \lambda\log(e+c_0\star f^*\omega)\leq 1\}\]
	and $A_2=K\setminus A_1$. On $A_2$ the integrand is dominated by $c_0\star f^*\omega$ which is integrable, while on $A_1$ notice that $\star f^*\omega\leq e^{1/\lambda-1}/c_0$ and since the function $t/\log(e+t)$ is increasing we obtain that the integrand is bounded in $A_1$ and we are done.
\end{proof}
	\section{Counterexamples to continuity and Lusin's condition}\label{counterexamples}
	
	\begin{proof}[Proof of Theorem \ref{thm2}]
	 Let $z=(x,y)$ and $f(x,y)=(f_1(x,y),f_2(x,y),f_3(x,y))$, where we take \begin{equation}
			f_1(z)=
			\begin{cases}
				\frac{x}{|z|}\left[e+\log\left(\frac{1}{|z|}\right)\right]^{-1} & \text{for} |z|\leq 1\\
				x/e &\text{for} |z|>1
			\end{cases}
		\end{equation}
		
		\begin{equation}
			f_2(z)=\begin{cases}
				\frac{y}{|z|}\left[e+\log\left(\frac{1}{|z|}\right)\right]^{-1} & \text{for} |z|\leq 1\\
				y/e &\text{for} |z|>1
			\end{cases}
		\end{equation}	 
		and
		\begin{equation}
			f_3(z)=\log\log \left(e+\left|\log|z|\right|\right).
		\end{equation}
		Let $F(z)=(f_1(z),f_2(z))$. This map is then a mapping of finite distortion, meaning that $F\in W^{1,1}_{loc}$ and that $|DF|^2\leq K'(z) J_F$, for some function  $K'(z)$ which is finite almost everywhere. Moreover it is true that $e^{\lambda K'(z)}\in L^1_{loc}(\mathbb{R}^2)$, for all $\lambda<2$. In fact we have that 
		\begin{equation}
			|DF(z)|=\begin{cases}
				\frac{1}{|z|\left(e-\log|z|\right)} & \text{for} |z|\leq 1\\
				1/e & \text{for} |z|>1
			\end{cases}	,
		\end{equation} 
		
		\begin{equation}
			J_F=\begin{cases}
				\frac{1}{|z|^2\left(e-\log|z|\right)^3} &\text{for} |z|\leq 1\\
				1/e^3 & \text{for} |z|>1
			\end{cases}
		\end{equation}
		and 
		\begin{equation}
			K'(z)=\begin{cases}
				\left(e-\log|z|\right) &\text{for} |z|\leq 1\\
				e & \text{for} |z|>1
			\end{cases}
		\end{equation}
		Moreover, 
		\begin{equation}
			|Df_3(z)|\leq 	\frac{1}{|z|\left(e-\log|z|\right)\log(e-\log|z|)}, \hspace{2mm} \text{for} |z|\leq 1
		\end{equation}
		and 
		\begin{equation}
			|Df_3(z)|\leq\frac{1}{|z|\left(e+\log|z|\right)\log(e+\log|z|)}, \hspace{2mm} \text{for} |z|>1
		\end{equation}

		It is also easy to see that $\star f^{*}\omega= J_F$, $||\omega||=1$ and that $(f,\omega)$ satisfies condition (D). Putting everything together  we obtain
		\begin{align*}
			|Df|^2\leq \left(|DF|+|Df_3|\right)^2&\leq \left(\sqrt{K'(z)J_F}+\frac{\sqrt{K'(z)J_F}}{\log(e+|\log|z||)}\right)^2\\&=\left(\frac{1}{\log(e+|\log|z||)}+1\right)^2K'(z)J_F.
		\end{align*}
		So that \eqref{eq01} is true with $K(z)=\left(\frac{1}{\log(e+|\log|z||)}+1\right)^2K'(z)$. Notice now that for any $\varepsilon>0$ there is an $r(\varepsilon)=r>0$ such that $\frac{1}{\log(e+|\log|z||)}<\varepsilon$, for $|z|\leq r$. Also, notice that $K(z)$ is bounded in $\R^2\setminus B(0,r)$.  Hence, \begin{equation}
			\int_{\R^2}\exp(\lambda K(z))dz=\int_{\R^2\setminus B(0,r)}\exp(\lambda K(z))dz+\int_{B(0,r)}\exp(\lambda K(z))dz
		\end{equation}
	and the first integral is finite for all $\lambda>0$ while $$\int_{B(0,r)}\exp(\lambda K(z))dz\leq \int_{B(0,r)}\exp(\lambda(1+\varepsilon)^2 K'(z))dz$$ which is finite for all $\lambda<2/(1+\varepsilon)^2$. Since $\varepsilon>0$ can be arbitrary we have 
 $e^{\lambda K(z)}\in L^1_{loc}$, for any given $\lambda< 2$, however the map $f$ does not have a continuous representative since its third coordinate $f_3$ does not ($f_3$ has a logarithmic singularity at $0$). 
	\end{proof}

For the construction in Theorem \ref{thmlusin} we are going to need a special case of \cite[Lemma 5.1]{Kauhanen1999}. 

\begin{lemma}\label{ulemma}
 There exists a function $u\in W^{1,n}_{loc}(B(0,1))$ so that $u$ is non negative, radial, continuous outside the origin, tends to $\infty$ when $x\to 0$ and satisfies 
	\begin{equation}\label{eqlem00}
	\int_{B(0,1)}\Phi(|\nabla u(x)|)dx<\infty 
	\end{equation}
where $\Phi(t)=t^n\log^{n-1}(e+t)\log^{n-1}(\log(e+t))$ and
\begin{equation}\label{eqlem01}
	 |\nabla u(x)|\leq \frac{1}{|x|(1-\log|x|)\log|\log |x||}	, \ \text{when}\ |x| \ \text{is small enough}.
	\end{equation}
\end{lemma}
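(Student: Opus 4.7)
The plan is to construct $u$ explicitly as a radial function blowing up at the origin. Set $u(x)=\phi(|x|)$ where, on a neighbourhood $(0,r_0)$ of $0$, one takes
\begin{equation*}
\phi(r)=\bigl(\log\log|\log r|\bigr)^{\alpha}
\end{equation*}
for a carefully chosen exponent $\alpha\in(0,1-1/n)$, and extends $\phi$ smoothly and boundedly to the rest of $(0,1]$. Radial symmetry, non-negativity, continuity off the origin, and the divergence $\phi(r)\to\infty$ as $r\to 0$ are all immediate from this choice.

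Next I would compute $|\nabla u(x)|=|\phi'(|x|)|$ by the chain rule, obtaining, for $r=|x|$ small,
\begin{equation*}
|\phi'(r)|=\frac{\alpha\,(\log\log|\log r|)^{\alpha-1}}{r\,|\log r|\,\log|\log r|}.
\end{equation*}
Since $\alpha<1$ and $1-\log r\sim|\log r|$ near $r=0$, the factor $(\log\log|\log r|)^{\alpha-1}$ tends to $0$, so $|\phi'(r)|$ is comfortably bounded above by $1/(r(1-\log r)\log|\log r|)$, yielding \eqref{eqlem01}.

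For the integrability \eqref{eqlem00}, I would pass to polar coordinates so that, up to the surface measure constant, the integral becomes $\int_0^1\Phi(|\phi'(r)|)\,r^{n-1}\,dr$. Using $\log(e+|\phi'(r)|)\asymp|\log r|$ and $\log\log(e+|\phi'(r)|)\asymp\log|\log r|$ for small $r$, a direct estimate gives
\begin{equation*}
\Phi(|\phi'(r)|)\,r^{n-1}\;\asymp\;\frac{(\log\log|\log r|)^{n(\alpha-1)}}{r\,|\log r|\,\log|\log r|}.
\end{equation*}
The substitutions $v=\log|\log r|$ and then $w=\log v$ turn the radial integral into $\int^{\infty}w^{n(\alpha-1)}\,dw$, which converges precisely when $\alpha<1-1/n$. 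Combined with $\Phi(t)\geq t^n$, this yields $|\nabla u|\in L^n_{\mathrm{loc}}(B(0,1))$; radial symmetry and continuity off the origin then promote $u$ to a genuine Sobolev function, so $u\in W^{1,n}_{\mathrm{loc}}(B(0,1))$.

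The main obstacle is calibrating the growth of $\phi$: the pointwise bound \eqref{eqlem01} is essentially saturated by functions like $\log\log|\log r|$, for which the integrand in \eqref{eqlem00} behaves like $1/(r|\log r|\log|\log r|)$ and diverges. To retain unboundedness at the origin while gaining integrability, I need $|\nabla u|$ to sit strictly below the bound by a factor vanishing (logarithmically) at $0$; inserting the damping $(\log\log|\log r|)^{\alpha-1}$ with $\alpha\in(0,1-1/n)$ is exactly what balances these two competing requirements.
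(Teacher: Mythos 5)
Your proof is correct, and it takes a genuinely different route from the paper. The paper follows \cite[Lemma 5.1]{Kauhanen1999} almost verbatim: it introduces the auxiliary functions $h_k(t)=\inf\{s>0:\phi(2s)\leq(2^kt)^n\}$ with $\phi(t)=t^{-n}\log^{-n}(e+t)\log^{-n}(\log(e+t))$, proves $\int_0^1 h_k=\infty$, builds a recursive scale sequence $a_k\downarrow 0$ by $\int_{a_{k+1}}^{a_k}h_k=1$, and defines $u$ piecewise on the annuli $a_{k+1}\leq|x|\leq a_k$. That construction is designed so that the bound \eqref{eqlem01} and the integrability \eqref{eqlem00} pop out from the defining inequality $\phi(h_k(t))\leq(2^kt)^n$, and it has the virtue of working essentially unchanged for an arbitrary Orlicz function in place of $\Phi$. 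You instead observe that for this specific $\Phi$ one can simply write $u(x)=\bigl(\log\log|\log|x||\bigr)^{\alpha}$ with $\alpha\in(0,1-1/n)$ and verify everything by a direct computation: the factor $(\log\log|\log r|)^{\alpha-1}\to 0$ gives \eqref{eqlem01} once $r$ is small (since $\alpha(\log\log|\log r|)^{\alpha-1}\tfrac{1+|\log r|}{|\log r|}\leq 1$ eventually), and the chain of substitutions $s=|\log r|$, $v=\log s$, $w=\log v$ reduces the radial integral in \eqref{eqlem00} to $\int^{\infty}w^{n(\alpha-1)}\,dw$, which converges exactly when $\alpha<1-1/n$. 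Your approach is more elementary and transparent for this lemma, at the cost of being bespoke to the iterated-logarithm shape of $\Phi$; the paper's generalized-inverse machinery is heavier but scales to the general Orlicz setting. One small imprecision: $\Phi(t)\geq t^n$ fails for small $t$ (since $\log\log(e+t)<1$ there), but this is harmless because $|\nabla u|$ is large only near the origin, which is where integrability is in question.
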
 
\begin{proof}
	Let $\phi(t)=t^{-n}\log^{-n}(e+t)\log^{-n}(\log(e+t))$. We define the real functions \begin{equation}
		h_k(t):=\inf\{s>0:\phi(2s)\leq (2^kt)^n\}, \ t\in(0,\infty).
	\end{equation} 
Notice now that 
\begin{equation}\label{eqlem02}
	h_k(t)\leq \frac{1}{t(1-\log t)\log|\log t|},
\end{equation} for all $k\geq 1$ and all $t$ small enough. Indeed, it is enough to show that \begin{equation}
	\phi\left(\frac{2}{t(1-\log t)\log|\log t|}\right)\leq (2^kt)^n,
\end{equation}
which is equivalent to 
\begin{equation}
	\frac{(1-\log t)\log|\log t|}{\log\left(e+\frac{2}{t(1-\log t)\log|\log t|}\right)\log\left(\log\left(e+\frac{2}{t(1-\log t)\log|\log t|}\right)\right)}\leq 2^{k-1}.
\end{equation}
Notice now that the left hand side goes to $1$ as $t\to 0$ so the above inequality is true for all $k\geq 1$ and $t$ small enough. Next we wish to show that $\int_0^1h_k(t)dt=\infty$. To that end, notice that we can find a $\sigma_k$ such that $\phi(2\sigma_k)<2^{kn}$ which implies that 
\begin{equation}
	\{(t,s)\colon s>\sigma_k,\ 0<t<2^{-k}\phi^{1/n}(2s)\}\subset \{(t,s)\colon 0<t<1,\ 0<s\leq h_k(t)\}.
\end{equation}
Hence by Fubini's theorem we have that 
\begin{equation}
	\begin{aligned}
	\int_0^1h_k(t)dt&=\int_{\{(t,s)\colon 0<t<1,\ 0<s\leq h_k(t)\}}dtds\\&\geq \int_{	\{(t,s)\colon s>\sigma_k,\ 0<t<2^{-k}\phi^{1/n}(2s)\}}dtds\\&=2^{-k}\int_{\sigma_k}^\infty \phi^{1/n}(2s)ds.
\end{aligned}
\end{equation}
It is not hard to see now that the last integral is $\infty$. Hence we can define a decreasing sequence $\{a_k\}$ of positive real number such that $a_1=1$ and 
\begin{equation}
	\int_{a_{k+1}}^{a_k}h_k(x)dx=1, \ \text{for all} \ k\in\N.
\end{equation}
The above implies that $h_k(a_{k+1})\geq 1$ and thus $\phi(2)\geq 2^{nk}a^n_{k+1}$. Hence $a_k\to 0$. We set now 
\begin{equation}
	u(x)=k+\int_{|x|}^{a_k}h_k, \ \text{when} \ a_{k+1}\leq |x|\leq a_k,
\end{equation}
which is continuous outside the origin, radial, non negative and  $\lim_{x\to0}u(x)=\infty$. Moreover, \eqref{eqlem02} implies 
\begin{equation}
	|\nabla u(x)|= |h_k(|x|)|\leq\frac{1}{|x|(1-\log|x|)\log|\log |x||},
\end{equation}
which proves \eqref{eqlem01}.
Finally, since $\phi(h_k(t))\leq (2^kt)^n$ we have that 
\begin{equation}
	h_k(t)^n\log^{n-1}(e+h_k(t))\log^{n-1}(\log(e+h_k(t)))\leq (2^kt)^{1-n}h_k(t) 
\end{equation}
and thus 
\begin{equation}
	\begin{aligned}
		\int_{B(0,1)}\Phi(|\nabla u(x)|)dx\leq & \sum_k \int_{\{a_{k+1}\leq |x|\leq a_k\}}(2^kt)^{1-n}h_k(|x|)dx\\=&\sum_k 2\pi\int_{a_{k+1}}^{a_k}r^{n-1}(2^kr)^{1-n}h_k(r)dx\\=& 2\pi\sum_k 2^{k(1-n)}<\infty. 
	\end{aligned}
\end{equation}
Hence \eqref{eqlem00} is proven and since $t^n\leq \Phi(t)$, for all $t>0$ we have that $|\nabla u(x)|^n\leq\Phi(|\nabla \phi(x)|)$ which implies that $u\in W^{1,n}_{loc}(B(0,1))$.
	\end{proof}

\begin{proof}[Proof of Theorem \ref{thmlusin}]
 We define $f=(f_1,f_2,f_3,f_4)$ where $F(z)=(f_1(z),f_2(z))$, $z=(x,y)$ will be a mapping of finite distortion.
The function $G=(f_3,f_4)$ will be constructed by adapting  the construction in \cite[Theorem 5.2]{Kauhanen1999}. 

We start by constructing $F$ first.

\textit{Construction of $F$:} The function $F$ will be obtained as the limit of a sequence of finite distortion mappings. First we set $z=(x,y)$ and \[g(z)=\begin{cases}
	\frac{z}{|z|}\left[1+\log\left(\frac{1}{|z|}\right)\right]^{-1} & \text{for} |z|\leq 1\\
	z &\text{for} |z|>1
\end{cases}.\]
Similarly as in the proof of Theorem \ref{thm2} it is easy to calculate that \begin{equation}
	|Dg(z)|=\begin{cases}
		\frac{1}{|z|\left(1-\log|z|\right)} & \text{for} |z|\leq 1\\
		1 & \text{for} |z|>1
	\end{cases}	,
\end{equation} 

\begin{equation}
	J_g=\begin{cases}
		\frac{1}{|z|^2\left(1-\log|z|\right)^3} &\text{for} |z|\leq 1\\
		1 & \text{for} |z|>1
	\end{cases}
\end{equation}
and 
\begin{equation}
	K_g(z)=\begin{cases}
		\left(1-\log|z|\right) &\text{for} |z|\leq 1\\
		1 & \text{for} |z|>1
	\end{cases}.
\end{equation}
Take now 4 disjoint balls $B_{i,0}(c_{i,0},R_0)$, $i=1,2,3,4$, with centres $c_{i,0}$ in the segment $[-1,1]\times\{0\}$ and radii to be determined later. Let also $A_{i,0}(c_{i,0},R_0,r_0)$ denote the annuli of the same centres, outer radius $R_0$ and inner radius $r_0$ which is also to be defined later. Define now the finite distortion map
\[f_1(z)=\begin{cases}
	R_0(1-\log(R_0))g(z-c_{i,0}) & \text{for} |z|\in A_{i,0}(c_{i,0},R_0,r_0)\\
\frac{R_0(1-\log R_0)}{r_0(1-\log r_0)}z &\text{for} |z|\in B(c_{i,0},r_0)\\
z & \text{otherwise}
\end{cases}\]

Inductively now, inside each of the balls $B_{i,n-1}(c_{i,n-1},r_{n-1})$, $i=1,\dots, 4^{n}$ take 4 disjoint balls with centres on the segment $[-1,1]\times\{0\}$. Thus we obtain the collection of balls $B_{i,n}(c_{i,n},R_n)$, $i=1,\dots 4^{n+1}$. We define the finite distortion map 
\[f_{n+1}(z)=\begin{cases}
	R_n(1-\log(R_n))g(z-c_{i,n}) & \text{for} |z|\in A_{i,n}(c_{i,n},R_n,r_n)\\
	\frac{R_n(1-\log R_n)}{r_n(1-\log r_n)}z &\text{for} |z|\in B(c_{i,n},r_n)\\
	z & \text{otherwise}
\end{cases}.\]

Let $F_n(z)=f_1\circ\dots\circ f_n$ which will be a sequence of finite distortion mappings. It is easy to see now that $F_n$ converges in the $W^{1,2}$-norm to a finite distortion function $F\in W^{1,2}_{loc}(\R^2,\R^2)$. We want $F$ to have exponentially integrable distortion. It is easy to see that when $\lambda<2$ we have
\begin{equation}
	\begin{aligned}
	\int_{\R^2}\exp(\lambda K_F(z))dz&=\sum_{n=0}^{\infty}4^{n+1}\int_{A_{i,n}(c_{i,n},R_n,r_n)}\exp(\lambda K_{f_{n+1}}(z))dz\\&=\sum_{n=0}^{\infty}4^{n+1}\int_{0}^{2\pi}\int_{r_n}^{R_n}e^{\lambda}e^{-\lambda\log r}rdrd\theta\\&=\sum_{n=0}^{\infty}\frac{2\pi 4^{n+1}e^\lambda}{-\lambda+2}(R_n^{-\lambda+2}-r_n^{-\lambda+2})\\&\leq \sum_{n=0}^{\infty}\frac{2\pi 4^{n+1}e^\lambda}{-\lambda+2}R_n^{-\lambda+2}.
		\end{aligned}
\end{equation} 
Thus if we take $R_n\leq \left(\frac{-\lambda+2}{2\pi e^\lambda 8^n}\right)^{1/(-\lambda+2)}$ then $\sum_{n=0}^{\infty}\frac{2\pi 4^{n+1}e^\lambda}{-\lambda+2}R_n^{-\lambda+2}<\infty$ and $F$ will be exponentially integrable for any $\lambda<2$. Finally, it is easy to calculate that for $z\in A_{i,n}(c_{i,n},R_n,r_n)$ we have that 
\begin{equation}
	J_F(z)=a_n\frac{1}{|z-c_{i,n}|^2(1-\log|z-c_{i,n}|)^3},
\end{equation}
and
\begin{equation}
	K_F(z)=1-\log|z-c_{i,n}|,
\end{equation}
where \[a_n=\prod_{j=0}^{n-1}\frac{R_j^2(1-\log R_j)^2}{r_j^2(1-\log r_j)^2}R_n^2(1-\log R_n)^2.\]

Notice that so far we have no restriction placed on $r_n$ and $R_n$ other than $R_n\leq \left(\frac{-\lambda+2}{2\pi e^\lambda 8^n}\right)^{1/(-\lambda+2)}$ and $r_n< R_n$.

The function $G$ that we are going to construct now will have the property that $G\in W^{1,2}_{loc}(\R^2,\R^2)$ and for any fixed $\varepsilon>0$ and suitable sequences $r_n$ and $R'_n<R_n$ it will satisfy
\begin{equation}
	|DG(z)|\leq \sqrt{a_n}\frac{\varepsilon}{|z-c_{i,n}|(1-\log|z-c_{i,n}|)}= \varepsilon \sqrt{K_F(z)J_F(z)},
\end{equation}
for all $z\in A_{i,n}(c_{i,n},R'_n,r_n)$ and $|DG(z)|=0$ otherwise. Moreover, we will have that $G([-1,1]\times\{0\})=[-1,1]^2$. Assuming that we have constructed this function, we will then have that the map $f=(F,G)\in W^{1,2}_{loc}(\R^2,\R^2)$ will satisfy the distortion inequality
\begin{equation}
	|Df(z)|\leq \left(|DF(z)|+|DG(z)|\right)^2\leq (1+\varepsilon)^2K_F(z)J_F(z)=K_f(z)\star f^*\omega,
	\end{equation}
almost everywhere. Since $K_F$ is exponentially integrable, so is $K_f$. Since $\varepsilon$ can be arbitrarily small $K_f$ can be arbitrarily close to $\lambda$-exponentially integrable. Moreover, it is easy to see that $F([-1,1]\times\{0\})=[-1,1]\times\{0\}$. Hence, $f([-1,1]\times\{0\})$ will be a surface with non zero Hausdorff 2 measure which means $f$ does not have Lusin's (N) property. All that remains is to construct G.

\textit{Construction of $G$:} We will construct recursively a sequence of functions $G_n$ in $W^{1,2}_{loc}(\R^2,\R^2)$ which will converge uniformly to the desired function $G$. We start by partitioning the square $[-1,1]^2$ in dyadic squares. We set \[S_0=\{[-1,0]\times[0,1],[-1,0]\times[-1,0],[0,1]\times[0,1],[0,1]\times[-1,0]\}\] and more generally $S_n$ will be a collection of $4^{n+1}$ squares obtained by partitioning each of the squares of $S_{n-1}$ into $4$. By $K_{i,n}$, $i=1,\dots 4^{n+1}$ we will denote the elements of $S_n$ and by $w_{i,n}$ their centres. Also we let $w_{0,-1}=0$.

We proceed now with the definition of $\{G_n\}$. First we define $G_0=0$. Assuming that we have defined $G_0,\dots,G_n$ we are going to construct $G_{n+1}$. We are going to use the sequences $R_n$ and $r_n$ from the construction of $F$. First we can choose $R_0$ small enough so that the two dimensional version of function $u$ from Lemma \ref{ulemma} satisfies \eqref{eqlem01} when $|z|\leq R_0$. Moreover, since $\int_{B(0,1)}\Phi(|\nabla u(z)|)dz<\infty$ we can choose $R_n$ small enough so that 
\begin{equation}\label{eqlus2}
	\int_{B(0,R_n)}\Phi(|\nabla u(z)|)dz<\frac{1}{2^{n+1}4^{n+1}},
	\end{equation}
for all $n=0,1,\dots$. Finally, for any fixed $\varepsilon>0$ we first temporarily choose $r_n=R'_n$ and then take $R'_n<R_n$ small enough so that \begin{equation}\label{eqlus1}
	\frac{1}{\log|\log|z-c_{i,n}||}\leq \varepsilon\sqrt{a_n},
\end{equation} when $|z-c_{i,n}|\leq R'_n$. This is possible since $a_n$ is constant on the annulus $A_{i,n}(c_{i,n},R_n,r_n)$.  Next since $u(z)\to\infty$ as $z\to 0$ we can in fact reselect $r_n<R'_n$  so that 
\begin{equation}
	u(r_n,r_n)-u(R'_n,R'_n)=d_n,
\end{equation}
where $d_n$ denotes the distance between the centres of a square in $S_n$ and one of its subdividing squares in $S_{n+1}$, $K_{j,n+1}\subset K_{i,n}$. Remember there is no restriction on $r_n$ and decreasing it makes $a_n$ larger so \eqref{eqlus1} is not affected by that. Define now $G_{n+1}$ to be $G_n$ outside of the balls $B_{i,n}(c_{i,n},R'_n)$. When $r_n\leq|z-c_{i,n}|\leq R'_n$ define \begin{equation}
	G_{n+1}(z)=w_{j,n-1}+(u(z-c_{i,n})-u(R'_n,R'_n))\frac{w_{i,n}-w_{j,n-1}}{|w_{i,n}-w_{j,n-1}|},
\end{equation}
where $j$ is the unique index such that $K_{i,n}\subset K_{j,n-1}$. Finally, when $|z-c_{i,n}|\leq r_n$ we define \[G_{n+1}(z)=w_{i,n}.\]
Next we need to show that the sequence $G_n$ converges uniformly. To prove that, notice that $G_n$ and $G_{n+1}$ differ only inside the balls $B_{i,n}(c_{i,n},R'_n)$ where $G_n$ is constant while $G_{n+1}$ maps the ball to a segment of length $d_n$. Hence, for $m>n$
\begin{equation}
	||G_{m}-G_n||_{\infty}\leq \sum_{k=n}^{m-1}||G_{k+1}-G_k||_\infty \leq \sum_{k=n}^{m-1}d_k.
\end{equation}
Since the lengths $d_k$ shrink at a geometric rate we have that $G_n$ is a uniformly Cauchy sequence and we are done. Thus the limit map $G$ will be continuous and it is easy to see that \eqref{eqlus2} implies \begin{equation}
	\sum_{i=1}^{4^{n+1}}\int_{B_{i,n}(c_{i,n},R'_n)}\Phi(|DG_{n+1}(z)|)dz<\frac{1}{2^{n+1}},
\end{equation} which in turn implies that $G\in W^{1,2}(\R^2,\R^2)$. Moreover, from \eqref{eqlem01} and \eqref{eqlus1} we obtain that 
\begin{equation}
	|DG(z)|=|\nabla u(z)|\leq \frac{\varepsilon \sqrt{a_n}}{|z-c_{i,n}|(1-\log|z-c_{i,n}|)},
\end{equation}
when $r_n\leq|z-c_{i,n}|\leq R'_n$. Finally, it is easy to see that for any $w_{i,n}$ there exists a sequence of points $z_k\in A_{i,k}(c_{i,k},R'_k,r_k)$, converging to some point in the segment $[-1,1]\times\{0\}$, such that $G(z_k)=w_{i,n}$. Since the set of all centres $\{w_{i,n}\}_{i,n\in \N}$ is dense in the square $[-1,1]^2$ and since $G$ is continuous we have that \[G([-1,1]\times\{0\})=[-1,1]^2,\]
as we wanted.
	\end{proof}
\bibliographystyle{amsplain}
\bibliography{bibliography}

\providecommand{\bysame}{\leavevmode\hbox to3em{\hrulefill}\thinspace}
\providecommand{\MR}{\relax\ifhmode\unskip\space\fi MR }
\providecommand{\MRhref}[2]{%
  \href{http://www.ams.org/mathscinet-getitem?mr=#1}{#2}
}
\providecommand{\href}[2]{#2}
\begin{thebibliography}{10}

\bibitem{Astala2009}
Kari Astala, Tadeusz Iwaniec, and Gaven Martin, \emph{Elliptic partial
  differential equations and quasiconformal mappings in the plane}, Princeton
  University Press, 2009.

\bibitem{Bojarski1983}
B.~Bojarski and T.~Iwaniec, \emph{Analytical foundations of the theory of
  quasiconformal mappings in $\mathbb{R}^n$}, Annales Academiae Scientiarum
  Fennicae Series A I Mathematica \textbf{8} (1983), 257--324.

\bibitem{Donaldson1971}
Thomas~K Donaldson and Neil~S Trudinger, \emph{Orlicz-sobolev spaces and
  imbedding theorems}, Journal of Functional Analysis \textbf{8} (1971), no.~1,
  52--75.

\bibitem{Faraco2005}
Daniel Faraco, Pekka Koskela, and Xiao Zhong, \emph{Mappings of finite
  distortion: the degree of regularity}, Advances in Mathematics \textbf{190}
  (2005), no.~2, 300--318.

\bibitem{Heikkilae2021a}
Susanna Heikkilä, \emph{Signed quasiregular curves}, preprint,
  arXiv:2101.09943, 2021.

\bibitem{Heikkilae2021}
Susanna Heikkilä, Pekka Pankka, and Eden Prywes, \emph{Quasiregular curves of
  small distortion in product manifolds}, preprint, arXiv:2106.11871, 2021.

\bibitem{Hencl2014}
Stanislav Hencl and Pekka Koskela, \emph{Lectures on mappings of finite
  distortion}, Springer International Publishing, 2014.

\bibitem{Iwaniec2001}
T.~Iwaniec and G.~Martin, \emph{Geometric function theory and non-linear
  analysis}, Oxford mathematical monographs, Oxford University Press, 2001.

\bibitem{Iwaniec1998}
Tadeusz Iwaniec, \emph{The gehring lemma}, Quasiconformal Mappings and
  Analysis, Springer New York, 1998, pp.~181--204.

\bibitem{Iwaniec2002}
Tadeusz Iwaniec, Pekka Koskela, and Gaven Martin, \emph{Mappings of
  {BMO}-distortion and beltrami-type operators}, Journal
  d{\textquotesingle}Analyse Math{\'{e}}matique \textbf{88} (2002), no.~1,
  337--381.

\bibitem{Iwaniec2001a}
Tadeusz Iwaniec, Pekka Koskela, and Jani Onninen, \emph{Mappings of finite
  distortion: Monotonicity and continuity}, Inventiones Mathematicae
  \textbf{144} (2001), no.~3, 507--531.

\bibitem{Kauhanen1999}
Janne Kauhanen, Pekka Koskela, and Jan Mal{\'{y}}, \emph{On functions with
  derivatives in a lorentz space}, manuscripta mathematica \textbf{100} (1999),
  no.~1, 87--101.

\bibitem{Kauhanen2001}
\bysame, \emph{Mappings of finite distortion: condition {N}}, Michigan
  Mathematical Journal \textbf{49} (2001), no.~1.

\bibitem{Kauhanen2003}
Janne Kauhanen, Pekka Koskela, Jan Mal{\'{y}}, Jani Onninen, and Xiao Zhong,
  \emph{Mappings of finite distortion: Sharp orlicz-conditions}, Revista
  Matem{\'{a}}tica Iberoamericana (2003), 857--872.

\bibitem{StevenG.Krantz2008}
Steven~G. Krantz and Harold~R. Parks, \emph{Geometric integration theory},
  Birkhäuser Boston, MA, August 2008.

\bibitem{Onninen2021}
Jani Onninen and Pekka Pankka, \emph{Quasiregular curves: Hölder continuity
  and higher integrability}, Complex Analysis and its Synergies \textbf{7}
  (2021), no.~3-4.

\bibitem{Pankka2020}
Pekka Pankka, \emph{Quasiregular curves}, Ann. Acad. Sci. Fenn. Math.
  \textbf{45} (2020), no.~2, 975--990.

\bibitem{Reshetnyak1989}
Yu. Reshetnyak, \emph{Space mappings with bounded distortion}, American
  Mathematical Society, 1989.

\bibitem{Rickman}
S.~Rickman, \emph{Quasiregular mappings}, vol.~26, Ergeb. Math. Grenzgeb.,
  no.~3, Springer-Verlag, Berlin, 1993.

\bibitem{RobertA.Adams2003}
John J. F.~Fournier Robert A.~Adams, \emph{Sobolev spaces}, Academic
  Press/Elsevier, 2003.

\bibitem{Vodopyanov1977}
S.~K. Vodop'yanov and V.~M. Gol'dshtein, \emph{Quasiconformal mappings and
  spaces of functions with generalized first derivatives}, Siberian
  Mathematical Journal \textbf{17} (1977), no.~3, 399--411.

\end{thebibliography}

\end{document}